\theoremstyle{plain}
\newtheorem{lem}{Lemma}[section]
\newtheorem{thm}[lem]{Theorem}
\title{The $2$-components of the $31$-stem homotopy groups of the $9$
and $10$-spheres}
\author{Tomohisa Inoue and Juno Mukai}
\date{}
\begin{document}

\maketitle

\begin{abstract}
  Group structures of the $2$-primary components of the $31$-stem
  homotopy groups of spheres were studied by Oda in 1979. There are,
  however, two incompletely determined groups. In this paper, our
  investigation with Toda's composition method gives structures of
  them.
\end{abstract}

\section{Introduction.}

We denote by $\pi_k^n$ the direct sum of the torsion-free part and the
$2$-primary component of the $k$-th homotopy group $\pi_k(S^n)$ of the
$n$-dimensional sphere $S^n$. The group $\pi_k^n$ was studied by Toda
\cite{Tod62} with his composition method, and several authors
\cite{MT63, Mim65, MMO75, Oda79} followed the method. In particular,
Oda \cite{Oda79} investigated the $2$-primary components of $k$-stem
homotopy groups for $25\leq k\leq31$. There are, however, two
incompletely determined groups in 31-stem: $\pi_{40}^9$ and
$\pi_{41}^{10}$.

We denote by $\{\chi_1,\dotsc,\chi_n\}$ a group generated by elements
$\chi_1,\dotsc,\chi_n$. If the group is isomorphic to a group $G$, it
is denoted by $G\{\chi_1,\dotsc,\chi_n\}$. For the group $\mathbb Z$
of integers, we set $\mathbb Z_n=\mathbb Z/n\mathbb Z$, and let
$(\mathbb Z_n)^k$ be the direct sum
$\mathbb Z_n\oplus\dotsb\oplus\mathbb Z_n$ of $k$-copies of
$\mathbb Z_n$. Let $E\colon\pi_k^n\rightarrow\pi_{k+1}^{n+1}$ be the
suspension homomorphism, and let
$P\colon\pi_{k+2}^{2n+1}\rightarrow\pi_k^n$ be the $P$-homomorphism
which is denoted by $\Delta$ in \cite{Tod62}. We use generators of
homotopy groups of spheres in the same symbol as defined in
\cite{Tod62, MT63, Mim65, MMO75, Oda79}.

Oda \cite[Theorem 3(c)]{Oda79} showed
\begin{equation}
  \label{Oda_p4009}
  \begin{split}
    \pi_{40}^9={}
    & \mathbb Z_2\{\sigma_9\delta_{16}\}\oplus
      \mathbb Z_2\{\sigma_9\bar\mu_{16}\sigma_{33}\}\oplus
      \mathbb Z_2\{\sigma_9\bar\sigma_{16}'\}\oplus
      \mathbb Z_2\{\delta_9\sigma_{33}\}\oplus
      {}\\
    & \mathbb Z_{16}\{\alpha_3^{IV}\}\oplus
      G_1\{\bar\nu_9\nu_{17}\bar\kappa_{20}\}
  \end{split}
\end{equation}
and
\begin{equation}
  \label{Oda_p4110}
  \pi_{41}^{10}=
  \mathbb Z_8\{P\sigma_{21}^\ast\}\oplus
  \mathbb Z_{16}\{E\alpha_3^{IV}\}\oplus
  G_2\{\kappa_{10}^\ast,\delta_{10}\sigma_{34}\},
\end{equation}
where $G_1$ is $\mathbb Z_2$ or $0$, and $G_2$ is $(\mathbb Z_2)^2$ or
$\mathbb Z_4$. We mention that equations
\begin{equation}
  \label{imageP}
  P(\nu_{19}\bar\kappa_{22})=\bar\nu_9\nu_{17}\bar\kappa_{20}
  \quad\text{and}\quad
  P(\nu_{21}\bar\sigma_{24})=\delta_{10}\sigma_{34}
\end{equation}
are obtained in \cite[III-(10.3), Proposition 4.7(6)]{Oda79}. We
denote by $[\chi_1,\chi_2]$ the Whitehead product of $\chi_1$ and
$\chi_2$, and by $\iota_n\in\pi_n^n$ the homotopy class of the
identity map on $S^n$. Since $PE^2\chi=\pm[\iota_n,\iota_n]\circ\chi$
for an element $\chi\in\pi_k^{2n-1}$ (see
\cite[Proposition 2.5]{Tod62}), we have
$PE^{n+2}\chi=\pm[\iota_n,\iota_n]E^n\chi=\pm[\iota_n,E\chi]$ for an
element $\chi\in\pi_k^{n-1}$. Then images of elements by $P$ are
frequently written as Whitehead products. For example, the equations
of \eqref{imageP} are identical with
\begin{equation}
  \label{imageW}
  [\iota_9,\nu_9\bar\kappa_{12}]=\bar\nu_9\nu_{17}\bar\kappa_{20}
  \quad\text{and}\quad
  [\iota_{10},\nu_{10}\bar\sigma_{13}]=\delta_{10}\sigma_{34}.
\end{equation}
Signs are unnecessary because $2\bar\nu_9\nu_{17}\bar\kappa_{20}=0$
and $2\delta_{10}\sigma_{34}=0$.

The purpose of this note is to determine the groups \eqref{Oda_p4009}
and \eqref{Oda_p4110}. Our results are stated as follows.

\begin{thm}
  \label{IM}
  \begin{enumerate}
    \item\label{IM_p4009}
      $[\iota_9,\nu_9\bar\kappa_{12}]
      =\bar\nu_9\nu_{17}\bar\kappa_{20}=0$ and
      \begin{equation*}
        \pi_{40}^9=
        \mathbb Z_2\{\sigma_9\delta_{16}\}\oplus
        \mathbb Z_2\{\sigma_9\bar\mu_{16}\sigma_{33}\}\oplus
        \mathbb Z_2\{\sigma_9\bar\sigma_{16}'\}\oplus
        \mathbb Z_2\{\delta_9\sigma_{33}\}\oplus
        \mathbb Z_{16}\{\alpha_3^{IV}\}.
      \end{equation*}
    \item\label{IM_p4110}
      $[\iota_{10},\nu_{10}\bar\sigma_{13}]
      =\delta_{10}\sigma_{34}=2\kappa_{10}^\ast$ and
      \begin{equation*}
        \pi_{41}^{10}=
        \mathbb Z_8\{P\sigma_{21}^\ast\}\oplus
        \mathbb Z_{16}\{E\alpha_3^{IV}\}\oplus
        \mathbb Z_4\{\kappa_{10}^\ast\}.
      \end{equation*}
  \end{enumerate}
\end{thm}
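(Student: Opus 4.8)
The plan is to reduce both parts, using the Whitehead‑product identities recalled above, to the evaluation of a single composite in $\pi_{40}^9$ (resp.\ in $\pi_{41}^{10}$), and then to pin down that composite by combining Toda's composition relations with the EHP exact sequences and the homotopy groups tabulated in \cite{Tod62,Mim65,MMO75,Oda79}.

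For \ref{IM_p4009}, write $\nu_9\bar\kappa_{12}=E\chi$ with $\chi=\nu_8\bar\kappa_{11}\in\pi_{31}^8$, and apply $PE^{n+2}\chi=\pm[\iota_n,\iota_n]E^n\chi=\pm[\iota_n,E\chi]$ with $n=9$; since $E^9\chi=\nu_{17}\bar\kappa_{20}$ and $E^{11}\chi=\nu_{19}\bar\kappa_{22}$ this yields $\bar\nu_9\nu_{17}\bar\kappa_{20}=[\iota_9,\nu_9\bar\kappa_{12}]=\pm[\iota_9,\iota_9]\circ\nu_{17}\bar\kappa_{20}=\pm(P\nu_{19})\circ\bar\kappa_{20}=\pm P(\nu_{19}\bar\kappa_{22})$, reproving \eqref{imageP}. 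Thus it suffices to prove $(P\nu_{19})\circ\bar\kappa_{20}=0$, where $P\nu_{19}=\pm[\iota_9,\iota_9]\circ\nu_{17}=\pm[\iota_9,\nu_9]\in\pi_{20}^9$. I would first determine $P\nu_{19}$ in the $11$-stem group $\pi_{20}^9$ from Toda's tables: if it vanishes, or more generally if $[\iota_9,\iota_9]$ admits a factor annihilating $\nu$ (e.g.\ a factor $\eta$, since $\eta\nu=0$), we are done at once; otherwise I would show that right composition of $P\nu_{19}$ with $\bar\kappa_{20}$ is zero by writing $\bar\kappa_{20}$ (or a convenient factor of it) as a secondary composition, carrying $\nu_{17}$ or $[\iota_9,\iota_9]$ inside the bracket, and using a known vanishing product in the stable range, with the bracket's indeterminacy controlled so as to land in a subgroup of $\pi_{40}^9$ on which the relevant composition is trivial. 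An equivalent approach works on the fibre side: since $\bar\nu_9\nu_{17}\bar\kappa_{20}=P(\nu_{19}\bar\kappa_{22})$, by exactness of $\pi_{42}^{10}\xrightarrow{H}\pi_{42}^{19}\xrightarrow{P}\pi_{40}^9$ it suffices to show $\nu_{19}\bar\kappa_{22}$ lies in $\operatorname{im}H$, i.e.\ that it lifts to $\pi_{42}^{10}$, and this can be settled by a Hopf‑invariant computation in the metastable range. Either way, substituting $G_1=0$ in \eqref{Oda_p4009} gives the stated group.

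For \ref{IM_p4110}, the same manipulation (with $\nu_{10}\bar\sigma_{13}=E(\nu_9\bar\sigma_{12})$) gives $\delta_{10}\sigma_{34}=[\iota_{10},\nu_{10}\bar\sigma_{13}]=\pm[\iota_{10},\iota_{10}]\circ\nu_{19}\bar\sigma_{22}=\pm P(\nu_{21}\bar\sigma_{24})$, consistent with \eqref{imageP}. By Oda's description in \eqref{Oda_p4110} it now suffices to prove the single relation $2\kappa_{10}^\ast=\delta_{10}\sigma_{34}$: if $\kappa_{10}^\ast$ had order $2$ this relation would force $\delta_{10}\sigma_{34}=0$, and then $G_2$ would be generated by the order‑$2$ element $\kappa_{10}^\ast$ alone, so $|G_2|\le 2$, contradicting \eqref{Oda_p4110}; hence $\kappa_{10}^\ast$ has order $4$, $2\kappa_{10}^\ast=\delta_{10}\sigma_{34}\ne 0$, and $G_2=\mathbb Z_4\{\kappa_{10}^\ast\}$. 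To establish $2\kappa_{10}^\ast=\delta_{10}\sigma_{34}$ I would first show that $E\kappa_{10}^\ast\in\pi_{42}^{11}$ has order $2$ (again by the composition method for $S^{11}$), so that $2\kappa_{10}^\ast\in\ker(E\colon\pi_{41}^{10}\to\pi_{42}^{11})=\operatorname{im}(P\colon\pi_{43}^{21}\to\pi_{41}^{10})$, say $2\kappa_{10}^\ast=Px$; then I would identify $x$ modulo $\ker(P\colon\pi_{43}^{21}\to\pi_{41}^{10})=\operatorname{im}(H\colon\pi_{43}^{11}\to\pi_{43}^{21})$ inside the metastable $22$-stem group $\pi_{43}^{21}$, showing $x\equiv\nu_{21}\bar\sigma_{24}$, whence $2\kappa_{10}^\ast=P(\nu_{21}\bar\sigma_{24})=\delta_{10}\sigma_{34}$ by \eqref{imageP}.

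The main obstacle is exactly the point at which Oda's argument halted: the naive EHP and composition bookkeeping does not by itself settle the relevant product, so a genuine new input is required, and in each case this amounts to evaluating a Toda bracket while proving that its indeterminacy is confined to a subgroup of the ambient homotopy group on which the composition under study vanishes (equivalently, carrying out a Hopf‑invariant or desuspension argument in the metastable range). Establishing these vanishing and indeterminacy claims is where the real work lies; once they are in place, both $\pi_{40}^9$ and $\pi_{41}^{10}$ are determined as stated.
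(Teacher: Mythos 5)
Your reductions are correct as far as they go: given Oda's results \eqref{Oda_p4009}, \eqref{Oda_p4110} and \eqref{imageP}, the theorem does come down to exactly the two relations $\bar\nu_9\nu_{17}\bar\kappa_{20}=0$ and $2\kappa_{10}^\ast=\delta_{10}\sigma_{34}$, and your order count deducing $G_2=\mathbb Z_4\{\kappa_{10}^\ast\}$ from the latter is fine. But this much is already in Oda; the proposal stops precisely where the paper's work begins, and neither relation is actually proved. For \ref{IM_p4009}, rewriting the target as $(P\nu_{19})\circ\bar\kappa_{20}$ is not a reduction but a restatement: by \eqref{w9}, \eqref{et7si8} and \eqref{ep4nu12} one has $P\nu_{19}=[\iota_9,\nu_9]=(\eta_9\sigma_{10}+\sigma_9\eta_{16})\circ\nu_{17}=\bar\nu_9\nu_{17}$, so you return to the element you started from; the hoped-for ``factor $\eta$ annihilating $\nu$'' does not materialize because in $\eta_9\sigma_{10}\nu_{17}$ the $\eta$ sits on the wrong side of $\sigma$. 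Likewise $\bar\kappa$ admits no convenient composition factorization, so ``write $\bar\kappa_{20}$ as a secondary composition with controlled indeterminacy'' is a description of the open problem rather than a step toward solving it. The paper's actual route is quite different: it proves (Theorem \ref{main1}) that $\bar\nu_7\nu_{15}\bar\kappa_{18}\equiv\sigma'\delta_{14}$ modulo $\sigma'\bar\sigma_{14}'$ and $\sigma'\bar\mu_{14}\sigma_{31}$ on $S^7$, via the Jacobi identity \eqref{e_jac1} for $(\nu_7,\eta_{10},\zeta_{11},\sigma_{23},2\sigma_{30})$ and the chain of Lemmas \ref{l_nezs}--\ref{l_nec}, and then applies $E^2$, using \eqref{w8} to turn each $\sigma'(-)$ into $2\sigma_9(-)=0$ in \eqref{Oda_p4009}. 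Nothing in the proposal anticipates or substitutes for this computation.

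For \ref{IM_p4110} the same criticism applies. Your plan (show $E\kappa_{10}^\ast$ has order $2$, hence $2\kappa_{10}^\ast\in\ker E=\operatorname{im}P$, then locate a preimage in $\pi_{43}^{21}$ modulo $\operatorname{im}H$) defers the entire difficulty: determining the order of $E\kappa_{10}^\ast$ in $\pi_{42}^{11}$ is not easier than the original question, and even granting $2\kappa_{10}^\ast\in\operatorname{im}P$ you must still rule out $2\kappa_{10}^\ast=0$ and single out $\nu_{21}\bar\sigma_{24}$ among the preimages --- none of which is addressed. The paper instead exploits the tertiary-composition description of $\kappa_{10}^\ast$: it realizes $\kappa_{10}^\ast$ as the Toda bracket $\{\overline{\kappa_{10}+8x\sigma_{10}^2},\tilde\eta_{24},\sigma_{26}^2\}_{13}$ over the Moore space $M^{25}$, shows via a computation in $\pi_{28}(M^{12})$ that the indeterminacy vanishes and that $2\kappa_{10}^\ast=E\{\bar\varepsilon_9,\eta_{24},\sigma_{25}^2\}_{12}$ (Lemma \ref{l_tb1}), and then identifies $\{\bar\varepsilon_9,\eta_{24},\sigma_{25}^2\}_9=\delta_9\sigma_{33}$ through the bracket expression for $\delta_3$ (Lemma \ref{l_tb2}). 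In short, the proposal is a correct framing of what must be proved, but the two essential relations --- the only genuinely new content of the theorem --- are left unproved, so the argument is incomplete.
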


To show Theorem \ref{IM}, we use Toda's composition method which
requires generators of homotopy groups. In Section 2, we describe some
relations of generators. Section 3 gives details of our investigation
concerning $\pi_{40}^9$. The structure of $\pi_{41}^{10}$ is obtained
in Section 4.

At the end of this section, we recall \cite[Proposition 4.2]{Tod62}:
there exists an exact sequence
\begin{equation*}
  \dotsb\rightarrow
  \pi_{k+2}^{2n+1}\xrightarrow P
  \pi_k^n\xrightarrow E
  \pi_{k+1}^{n+1}\xrightarrow H
  \pi_{k+1}^{2n+1}\xrightarrow P
  \pi_{k-1}^n\xrightarrow E
  \pi_k^{n+1}\rightarrow
  \dotsb,
\end{equation*}
called the EHP sequence, where $H$ be the Hopf homomorphism.

\section{Recollection of some relations.}

We use notations in \cite{Tod62} and properties of Toda brackets
freely. We know
\begin{gather}
  \label{w2}
  2\eta_2=[\iota_2,\iota_2],
  \quad\text{\cite[Proposition 5.1]{Tod62};}
  \\
  \label{w4}
  \eta_3\nu_4=\nu'\eta_6,
  \quad\eta_4\nu_5=(E\nu')\eta_7=[\iota_4,\eta_4],
  \quad\text{\cite[(5.9), (5.11)]{Tod62};}
  \\
  \label{w5}
  \nu_5\eta_8=[\iota_5,\iota_5],
  \quad\text{\cite[(5.10)]{Tod62};}
  \\
  \label{w8}
  2\sigma_8-E\sigma'=\pm[\iota_8,\iota_8],
  \quad\text{\cite[(5.16)]{Tod62};}
  \\
  \label{w9}
  \eta_9\sigma_{10}+\sigma_9\eta_{16}
  =[\iota_9,\iota_9],
  \quad\text{\cite[Lemma 6.4, (7.1)]{Tod62};}
  \\
  \label{et7si8}
  \eta_7\sigma_8=\bar\nu_7+\varepsilon_7+\sigma'\eta_{14},
  \quad\eta_9\sigma_{10}=\bar\nu_9+\varepsilon_9,
  \quad\text{\cite[Lemma 6.4, (7.4)]{Tod62};}
  \\
  \label{et6ep7}
  \eta_3\varepsilon_4=\varepsilon_3\eta_{11},
  \quad\text{\cite[(7.5)]{Tod62};}
  \\
  \label{ep4nu12}
  \varepsilon_4\nu_{12}=[\iota_4,\iota_4]\bar\nu_7,
  \quad\text{\cite[(7.13)]{Tod62};}
  \\
  \label{nu6ep9}
  \nu_6\bar\nu_9=\nu_6\varepsilon_9
  =2\bar\nu_6\nu_{14}=[\iota_6,\nu_6^2],
  \quad\text{\cite[(7.17), (7.18)]{Tod62};}
  \\
  \label{w15}
  2\sigma_{15}^2=[\iota_{15},\iota_{15}],
  \quad\text{\cite[(10.10)]{Tod62};}
  \\
  \label{ep3si11}
  \varepsilon_3\sigma_{11}=0,
  \quad\sigma_{11}\varepsilon_{18}=0,
  \quad\bar\nu_6\sigma_{14}=0,
  \quad\text{\cite[Lemma 10.7]{Tod62};}
  \\
  \label{et6ka7}
  \eta_6\kappa_7=\bar\varepsilon_6,
  \quad\kappa_9\eta_{23}=\bar\varepsilon_9,
  \quad\text{\cite[(10.23)]{Tod62};}
  \\
  \label{ep3ep11}
  \nu_5\sigma_8\nu_{15}^2=\eta_5\bar\varepsilon_6,
  \quad
  \varepsilon_3^2=\varepsilon_3\bar\nu_{11}
  =\eta_3\bar\varepsilon_4=\bar\varepsilon_3\eta_{18},
  \quad\text{\cite[Lemma 12.10]{Tod62}.}
\end{gather}
By \eqref{et7si8} and \eqref{ep3si11}, we have
\begin{equation}
  \label{et9si10si17}
  \eta_9\sigma_{10}^2=(\bar\nu_9+\varepsilon_9)\sigma_{17}
  =\bar\nu_9\sigma_{17}+\varepsilon_9\sigma_{17}=0.
\end{equation}

We recall \cite[(7.19)]{Tod62}:
\begin{equation}
  \label{nu7si10}
  \sigma'\nu_{14}=x\nu_7\sigma_{10}
  \text{ for some odd integer }x.
\end{equation}
Then \eqref{w8} gives $2\sigma_9\nu_{16}=x\nu_9\sigma_{12}$. Since
$8\sigma_9\nu_{16}=8\nu_9\sigma_{12}=0$ (see
\cite[Theorem 7.3]{Tod62}), we have
\begin{equation}
  \label{nu9si12}
  \nu_9\sigma_{12}=\pm2\sigma_9\nu_{16}
\end{equation}
and hence, by \cite[(7.21)]{Tod62},
\begin{equation}
  \label{w11}
  \nu_{10}\sigma_{13}=2\sigma_{10}\nu_{17}=[\iota_{10},\eta_{10}],
  \quad
  \sigma_{11}\nu_{18}=[\iota_{11},\iota_{11}].
\end{equation}
Similarly, we obtain
\begin{equation*}
  \label{ze9si20}
  \zeta_9\sigma_{20}=\pm2\sigma_9\zeta_{16}
\end{equation*}
because $\sigma'\zeta_{14}=x\zeta_7\sigma_{18}$ for some odd integer
$x$ (see \cite[Lemma 12.12]{Tod62}) and
$8\sigma_9\zeta_{16}=8\zeta_9\sigma_{20}=0$ (see
\cite[Theorem 12.8]{Tod62}). The relation and \cite[(12.25)]{Tod62}
imply
\begin{equation}
  \label{ze10si21}
  \zeta_{10}\sigma_{21}=2\sigma_{10}\zeta_{17}
  =[\iota_{10},\mu_{10}].
\end{equation}

By \cite[Propositions (2.2)(5), (2.2)(6)]{Ogu64} and
\cite[(7.25)]{Tod62}, we have
\begin{equation}
  \label{et5ze6}
  \eta_5\zeta_6=0,
  \quad
  \zeta_6\eta_{17}=\nu_6\mu_9=8[\iota_6,\iota_6]\sigma_{11}.
\end{equation}

By \cite[Lemma 5.12 and Remark]{Tod62},
\begin{equation}
  \label{nu6nu9}
  \{\eta_n,\nu_{n+1},\eta_{n+4}\}=\nu_n^2
  \text{ for }n\geq5,
\end{equation}
and hence, by
$\bar\sigma_6\in\{\nu_6,\varepsilon_9+\bar\nu_9,\sigma_{17}\}_1$
(see \cite[p.~138]{Tod62}), \eqref{et7si8}, \eqref{nu7si10} and
\eqref{w11},
\begin{equation*}
  \begin{split}
    \eta_5\bar\sigma_6
    & \in\eta_5\circ\{\nu_6,\varepsilon_9+\bar\nu_9,\sigma_{17}\}\\
    & =\eta_5\circ\{\nu_6,\eta_9\sigma_{10},\sigma_{17}\}\\
    & =-\{\eta_5,\nu_6,\eta_9\sigma_{10}\}\circ\sigma_{18}\\
    & \supset-\{\eta_5,\nu_6,\eta_9\}\circ\sigma_{11}^2
      =-\nu_5^2\sigma_{11}^2
      =-x\nu_5(E\sigma')\nu_{15}\sigma_{18}
      =0\\
    & \bmod\eta_5\nu_6\circ\pi_{25}^9
      +\eta_5\circ\pi_{18}^6\circ\sigma_{18}
  \end{split}
\end{equation*}
for some odd integer $x$. Here, $\eta_5\nu_6=0$ by \eqref{w4};
$\eta_5\circ\pi_{18}^6=\{\eta_5[\iota_6,\iota_6]\sigma_{11}\}$ by
\cite[Theorem 7.6]{Tod62}. By \eqref{w5} and by relations
$\eta_5^3=4\nu_5$ and $2\nu_5^2=0$ (see
\cite[(5.5), Proposition 5.11]{Tod62}), we have
$\eta_5[\iota_6,\iota_6]=
[\eta_5,\eta_5]=
[\iota_5,\iota_5]\eta_9^2=
\nu_5\eta_8^3=4\nu_5^2=0$.
This implies $\eta_5\circ\pi_{18}^6=0$. Thus
\begin{equation}
  \label{et5bs6}
  \eta_5\bar\sigma_6=0.
\end{equation}

We know \cite[Lemma 16.1, p.~50]{MT63}:
\begin{equation}
  \label{p3010}
  \pi_{30}^{10}=
  \mathbb Z_8\{\bar\kappa_{10}\}\oplus\mathbb Z_8\{\beta'\},
\end{equation}
where
\begin{equation}
  \label{b'}
  E\beta'=\theta'\varepsilon_{23},
  \quad
  H\beta'=\zeta_{19}
  \quad\text{and}\quad
  2\beta'=\pm[\iota_{10},\zeta_{10}].
\end{equation}
By \cite[Proposition (2.6)(2)]{Ogu64},
$\nu_8\theta'=(E\sigma')\varepsilon_{15}$ or $(E\sigma')\bar\nu_{15}$.
Then $E(\nu_7\beta')=\nu_8\theta'\varepsilon_{23}$ equals
$(E\sigma')\varepsilon_{15}^2$ or
$(E\sigma')\bar\nu_{15}\varepsilon_{23}$. Here,
\begin{equation}
  \label{ep9ep17}
  \varepsilon_9^2=\eta_9\bar\varepsilon_{10}
  =\nu_9\sigma_{12}\nu_{19}^2=0
\end{equation}
by \eqref{ep3ep11} and \eqref{w11}. Furthermore,
$\bar\nu_6\varepsilon_{14}=0$ by \cite[Proposition (2.8)(2)]{Ogu64}.
These imply $E(\nu_7\beta')=0$ and
\begin{equation}
  \label{nu7b'}
  \nu_7\beta'=0
\end{equation}
because $E\colon\pi_{30}^7\rightarrow\pi_{31}^8$ is a monomorphism. On
the other hand,
$E(\beta'\sigma_{30})=\theta'\varepsilon_{23}\sigma_{31}=0$ by
\eqref{ep3si11} and $H(\beta'\sigma_{30})=\zeta_{19}\sigma_{30}=0$ by
\eqref{ze10si21}. Then
$\beta'\sigma_{30}\in\ker E=\mathbb Z_4\{P\nu_{21}^\ast\}$ (see
\cite[Theorem 2(a)]{Oda79}) and $\beta'\sigma_{30}\in\ker H$. Since
$HP\nu_{21}^\ast=\pm2\nu_{19}^\ast$ (see
\cite[Proposition 2.7]{Tod62}) and $\nu_{19}^\ast$ is of order $8$
(see \cite[Theorem 12.22]{Tod62}), we have $\ker E\cap\ker H=0$, that
is,
\begin{equation}
  \label{b'si30}
  \beta'\sigma_{30}=0.
\end{equation}

By \cite[the proof of Lemma 3.2]{MMpre}, we have
$(E\lambda')\eta_{30}\sigma_{31}=(E\xi')\eta_{30}\sigma_{31}=0$. Since
\begin{equation}
  \label{mono}
  E\colon\pi_{37}^{11}\rightarrow\pi_{38}^{12}
  \text{ is a monomorphism,}
  \quad\text{\cite[I-(8.17)]{Oda79},}
\end{equation}
the relations give
\begin{equation}
  \label{pret29si30}
  \lambda'\eta_{29}\sigma_{30}=\xi'\eta_{29}\sigma_{30}=0.
\end{equation}
We may show \eqref{pret29si30} by the same way as \cite{MMpre} without
\eqref{mono}.

We show the following lemma.

\begin{lem}
  \label{bm3ka20}
  $\bar\mu_3\kappa_{20}=0$.
\end{lem}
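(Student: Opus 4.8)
The plan is to start from the Toda bracket presentation of $\bar\mu_3$ in \cite{Tod62}, which exhibits it as the $17$-stem member of the $\mu$-family, so that $\bar\mu_3$ lies in a Toda bracket of the form $\{\mu_3,2\iota_{12},8\sigma_{12}\}\subseteq\pi_{20}^3$; this bracket is legitimate since $2\mu_3=0$ and $16\sigma_{12}=0$. One then absorbs $\kappa_{20}$ into the bracket. Writing $\kappa_{20}=E\kappa_{19}$ and noting $8\sigma_{12}\kappa_{19}=0$ (the $21$-stem element $\sigma_{12}\kappa_{19}$ is of order at most $2$; null-relations of exactly this flavour, such as $\bar\nu_6\sigma_{14}=0$, $\varepsilon_3\sigma_{11}=0$ and $\eta_9\sigma_{10}^2=0$, are collected in Section~2), the secondary bracket $\{2\iota_{12},8\sigma_{12},\kappa_{19}\}\subseteq\pi_{34}^{12}$ is defined, and the shuffling formula gives
\begin{equation*}
  \bar\mu_3\kappa_{20}\in\{\mu_3,2\iota_{12},8\sigma_{12}\}\circ E\kappa_{19}
  \subseteq\pm\,\mu_3\circ\{2\iota_{12},8\sigma_{12},\kappa_{19}\}.
\end{equation*}
Thus the lemma reduces to showing that $\mu_3$ kills $\{2\iota_{12},8\sigma_{12},\kappa_{19}\}$ together with its indeterminacy.

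For the bracket itself I would locate it in $\pi_{34}^{12}$ --- a $22$-stem group of a $12$-sphere, read off from \cite{MMO75} or \cite{Oda79} --- run through its generators, and check that each dies under left composition with $\mu_3$, using the relations of Section~2 (for instance $\varepsilon_3\sigma_{11}=0$ from \eqref{ep3si11}, $\eta_9\sigma_{10}^2=0$ from \eqref{et9si10si17}, $\eta_5\bar\sigma_6=0$ from \eqref{et5bs6}) together with the fact that $\mu_3$ and essentially every element in play has order $2$. The indeterminacy splits into $\mu_3\circ 2\pi_{34}^{12}$, which vanishes because $2\mu_3=0$, and $\mu_3\circ\pi_{20}^{12}\circ\kappa_{20}$, i.e.\ $\mu_3$ composed with an $8$-stem class (a combination of $\bar\nu_{12}$ and $\varepsilon_{12}$) followed by $\kappa_{20}$; this I would kill either from $\mu_3\bar\nu_{12}=\mu_3\varepsilon_{12}=0$ in $\pi_{20}^3$ or, failing that, by a direct Toda-bracket computation using \eqref{et6ka7}.

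The step I expect to be the main obstacle is precisely the control of this indeterminacy: pinning it down requires knowing a couple of unstable $30$- and $31$-stem groups of small spheres, and some of those are among the groups this paper sets out to determine. If that route becomes circular, the fall-back is to suspend the whole identity --- $E^{N}(\bar\mu_3\kappa_{20})=\bar\mu_{3+N}\kappa_{20+N}$ lies in the stable range for large $N$, where $\bar\mu\kappa=0$ can be checked directly --- and then descend one dimension at a time, using the monomorphism inputs like \eqref{mono} together with the $P$/EHP apparatus and a $\ker E\cap\ker H=0$ argument in the style of \eqref{b'si30}.
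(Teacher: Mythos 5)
Your opening move is the same as the paper's: present $\bar\mu_3$ as a representative of the Toda bracket $\{\mu_3,2\iota_{12},8\sigma_{12}\}$ and shuffle $\kappa_{20}$ into the bracket, so that $\bar\mu_3\kappa_{20}$ is trapped in $\mu_3$ composed with a known $22$-stem group. (The paper keeps the subscript and obtains $\bar\mu_3\kappa_{20}\in\mu_3\circ E\{2\iota_{11},8\sigma_{11},\kappa_{18}\}\subset\mu_3\circ E\pi_{33}^{11}$, which is sharper than your $\mu_3\circ\{2\iota_{12},8\sigma_{12},\kappa_{19}\}\subset\mu_3\circ\pi_{34}^{12}$, since $\pi_{33}^{11}=\{\sigma_{11}\rho_{18},\varepsilon_{11}\kappa_{19},\nu_{11}\bar\sigma_{14}\}$ can be read off from Mimura's Theorem B, whereas your route obliges you to list all generators of $\pi_{34}^{12}$, including possible non-suspension ones.) Note also that once the element lies in $\mu_3\circ G$ for a single known group $G$, left composition with $\mu_3$ is a homomorphism and it suffices to kill the generators of $G$; the indeterminacy analysis you single out as the main obstacle is therefore beside the point, and no circularity with the groups computed later in the paper arises.

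The genuine gap is that the entire content of the lemma is the step you defer, namely showing that $\mu_3$ annihilates each generator, and the tools you name for it (the Section~2 relations $\varepsilon_3\sigma_{11}=0$, $\eta_9\sigma_{10}^2=0$, together with order-$2$ considerations) do not suffice. What the paper actually uses is a list of specific identities imported from Oda and \={O}guchi: $\mu_3\sigma_{12}\rho_{19}=0$; the congruence $\mu_3\varepsilon_{12}\equiv\eta_3\mu_4\sigma_{13}\bmod2\bar\varepsilon'$ combined with $\sigma_{10}\kappa_{17}=\nu_{10}\lambda$ and $\eta_{12}\nu_{13}=0$ to kill $\mu_3\varepsilon_{12}\kappa_{20}$; and $\mu_3\nu_{12}=\nu'\eta_6\varepsilon_7$ combined with $\eta_3\varepsilon_4=\varepsilon_3\eta_{11}$ and $\eta_5\bar\sigma_6=0$ (the one Section~2 relation that genuinely enters) to kill $\mu_3\nu_{12}\bar\sigma_{15}$. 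Without inputs of this kind the argument does not close. Your fallback of suspending to the stable range and descending is not viable either: $\bar\mu\kappa$ lives in the stable $26$-stem where it need not be obviously zero, and in any case the suspension homomorphisms between $S^3$ and the stable range have large kernels in these stems, so vanishing after many suspensions says nothing about $\bar\mu_3\kappa_{20}$ itself.
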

\begin{proof}
  Since $\bar\mu_3\in\{\mu_3,2\iota_{12},8\sigma_{12}\}_1$ (see
  \cite[p.~136]{Tod62}), we have
  \begin{equation*}
    \bar\mu_3\kappa_{20}
    \in\{\mu_3,2\iota_{12},8\sigma_{12}\}_1\circ\kappa_{20}
    =\mu_3\circ E\{2\iota_{11},8\sigma_{11},\kappa_{18}\}
    \subset\mu_3\circ E\pi_{33}^{11}.
  \end{equation*}
  Then
  $\pi_{33}^{11}=\{\sigma_{11}\rho_{18},
  \varepsilon_{11}\kappa_{19},\nu_{11}\bar\sigma_{14}\}$
  (see \cite[Theorem B]{Mim65}) gives
  \begin{equation*}
    \bar\mu_3\kappa_{20}
    \in\{\mu_3\sigma_{12}\rho_{19},
    \mu_3\varepsilon_{12}\kappa_{20},\mu_3\nu_{12}\bar\sigma_{15}\}.
  \end{equation*}
  By \cite[III-Proposition 2.6(1)]{Oda79}, the element
  $\mu_3\sigma_{12}\rho_{19}$ is equal to $0$. Since
  $\eta_3\mu_4=\mu_3\eta_{12}$,
  $\mu_3\varepsilon_{12}\equiv\eta_3\mu_4\sigma_{13}
  \bmod2\bar\varepsilon'$
  (see \cite[Propositions (2.2)(2), (2.13)(7)]{Ogu64}), and
  $\kappa_{10}$ is of order $2$ (see \cite[Theorem 10.3]{Tod62}), we
  have
  \begin{equation*}
    \mu_3\varepsilon_{12}\kappa_{20}\equiv
    \eta_3\mu_4\sigma_{13}\kappa_{20}=
    \mu_3\eta_{12}\sigma_{13}\kappa_{20}\bmod
    (2\bar\varepsilon')\kappa_{20}=
    \bar\varepsilon'(2\kappa_{20})=0.
  \end{equation*}
  Then
  $\mu_3\varepsilon_{12}\kappa_{20}=
  \mu_3\eta_{12}\nu_{13}E^3\lambda=0$,
  as $\sigma_{10}\kappa_{17}=\nu_{10}\lambda$ (see
  \cite[I-Proposition 3.1(1)]{Oda79}) and by \eqref{w4}. By relations
  $\mu_3\nu_{12}=\nu'\eta_6\varepsilon_7$ (see
  \cite[Proposition (2.2)(4)]{Ogu64}), \eqref{et6ep7} and
  \eqref{et5bs6}, we have
  $\mu_3\nu_{12}\bar\sigma_{15}=
  \nu'\eta_6\varepsilon_7\bar\sigma_{15}=
  \nu'\varepsilon_6\eta_{14}\bar\sigma_{15}=0$. Thus
  $\bar\mu_3\kappa_{20}=0$.
\end{proof}

\section{Proof of Theorem \ref{IM}\ref{IM_p4009}.}

In this section, we shall show the following theorem.

\begin{thm}
  \label{main1}
  $\sigma'\delta_{14}
  \equiv\bar\nu_7\nu_{15}\bar\kappa_{18}
  \bmod\sigma'\bar\sigma_{14}',
  \sigma'\bar\mu_{14}\sigma_{31}$.
\end{thm}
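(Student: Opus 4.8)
The plan is to compute $\sigma'\delta_{14}$ by a chain of bracket manipulations, aiming to land it inside a Toda bracket that also contains $\bar\nu_7\nu_{15}\bar\kappa_{18}$ and whose indeterminacy is exactly the subgroup generated by $\sigma'\bar\sigma_{14}'$ and $\sigma'\bar\mu_{14}\sigma_{31}$. Concretely, I would begin from the description of the $24$-stem class $\delta$ given in \cite{Mim65, MMO75, Oda79}, namely a secondary composition $\delta_{14}\in\{\alpha,\beta,\gamma\}$ for suitable $\alpha,\beta,\gamma$; check by means of the relations of Section~2 that $\sigma'\alpha=0$, so that the juggling formula places $\sigma'\delta_{14}$ in a bracket of the form $\{\sigma'\alpha,\beta,\gamma\}$; and then rewrite $\sigma'\alpha$ using \eqref{w8}, \eqref{nu7si10} and \eqref{nu9si12}, which express $\sigma'$, after one or two suspensions, in terms of $\sigma$, $\nu$ and Whitehead products. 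This turns $\sigma'\delta_{14}$ into an explicit bracket whose entries are assembled from $\nu$, $\sigma$, $\bar\kappa$ and low-stem classes.

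For the appearance of $\bar\nu_7\nu_{15}\bar\kappa_{18}$, I would bring in $\bar\kappa$ through \eqref{p3010}, together with $\nu_7\beta'=0$ and $\beta'\sigma_{30}=0$ from \eqref{nu7b'} and \eqref{b'si30}, which separate the $\bar\kappa$-summand of $\pi_{30}^{10}$ from the $\beta'$-summand, and rewrite the middle entry using $\eta_9\sigma_{10}=\bar\nu_9+\varepsilon_9$ from \eqref{et7si8} and the consequent vanishings \eqref{et9si10si17} and \eqref{ep9ep17}. The composite $\bar\nu_7\nu_{15}\bar\kappa_{18}$ should be precisely the surviving term, the other summands being annihilated by relations such as $\bar\nu_6\sigma_{14}=0$ and $\varepsilon_3\sigma_{11}=0$ from \eqref{ep3si11}, $\nu_5\sigma_8\nu_{15}^2=\eta_5\bar\varepsilon_6$ from \eqref{ep3ep11}, $\eta_5\bar\sigma_6=0$ from \eqref{et5bs6}, $\bar\mu_3\kappa_{20}=0$ from Lemma~\ref{bm3ka20}, and $\lambda'\eta_{29}\sigma_{30}=\xi'\eta_{29}\sigma_{30}=0$ from \eqref{pret29si30}; the leading factor $\bar\nu_7$ is produced by \eqref{nu6nu9} and \eqref{nu6ep9}, since $\bar\nu$ is itself a secondary composition of $\eta$'s and $\nu$'s.

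Finally, the indeterminacy of each bracket that occurs is of the shape $(\text{first entry})\circ\pi_\ast^\ast+\pi_\ast^7\circ(\text{last entry})$; invoking Oda's and Mimura's determinations of the relevant homotopy groups — the $24$-stem groups near $n=14$, $\pi_{37}^{11}$ through the monomorphism \eqref{mono}, $\pi_{33}^{11}$ from \cite{Mim65} and $\pi_{30}^{10}$ from \eqref{p3010} — one checks generator by generator that this indeterminacy is contained in $\{\sigma'\bar\sigma_{14}',\sigma'\bar\mu_{14}\sigma_{31}\}$, and that the remaining ambiguity in the choice of $\delta_{14}$ is absorbed into the same subgroup; this gives the asserted congruence. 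The main obstacle should be exactly this bookkeeping: one needs all the pertinent homotopy groups in several adjacent stems at the unstable level $n\le 14$ under control, and one must establish a handful of further vanishing relations of the type $\sigma'\cdot(\text{$24$-stem class})=0$ to legitimise the whole chain of bracket manipulations.
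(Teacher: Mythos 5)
Your proposal is a generic template (express $\delta$ as a secondary composition, juggle $\sigma'$ into the bracket, check indeterminacies) but it omits the ideas that actually make the congruence provable, and its first step is internally inconsistent: you propose both to verify $\sigma'\alpha=0$ and then to ``rewrite $\sigma'\alpha$'' inside the bracket. If $\sigma'\alpha=0$, the juggled bracket $\{\sigma'\alpha,\beta,\gamma\}=\{0,\beta,\gamma\}$ collapses to the coset $\pi_{31}^{7}\circ\sigma_{31}$ of $0$ (taking $\delta_3\in\{\varepsilon_3,\varepsilon_{11}+\bar\nu_{11},\sigma_{19}\}_1$, so $\gamma=\sigma_{30}$); that conclusion neither produces $\bar\nu_7\nu_{15}\bar\kappa_{18}$ nor has indeterminacy $\{\sigma'\bar\sigma_{14}',\sigma'\bar\mu_{14}\sigma_{31}\}$, and it is not the statement to be proved. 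The paper does not manipulate the defining bracket of $\delta$ at all. It first trades $\delta$ for $\eta\phi$ via Oda's relation $\eta_4\phi_5\equiv\delta_4\bmod\bar\mu_4\sigma_{21},(E\varepsilon')\kappa_{14}$ (this is where the two mod terms in the statement come from, after killing $E^{11}\varepsilon'$), and then computes $\sigma'\eta_{14}\phi_{15}$ by the Jacobi identity for Toda brackets applied to the quintuple $(\nu_7,\eta_{10},\zeta_{11},\sigma_{22},2\sigma_{29/30})$: Lemmas \ref{l_ncs} and \ref{l_css} dispose of two of the three triple brackets (using $\{\nu_7,\eta_{10},\zeta_{11}\}\ni\sigma'\eta_{14}\varepsilon_{15}$ and $\{\eta_9\sigma_{10},\sigma_{17},2\sigma_{24}\}\ni\phi_9$), leaving $\sigma'\eta_{14}\phi_{15}\in\{\nu_7,\eta_{10},\{\zeta_{11},\sigma_{22},2\sigma_{29}\}\}$.

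The appearance of $\bar\kappa$ is then forced by the stable Hirato--Mukai computation $\langle\sigma,2\sigma,\zeta\rangle=\nu^2\bar\kappa$, which pins down $\{\zeta_{11},\sigma_{22},2\sigma_{29}\}=\nu_{11}^2\bar\kappa_{17}\pm2\tau'''+x\sigma_{11}\bar\sigma_{18}$ (Lemma \ref{l_zss}), after which $\{\nu_7,\eta_{10},\nu_{11}\}\ni\bar\nu_7$ delivers $\bar\nu_7\nu_{15}\bar\kappa_{18}$ (Lemma \ref{l_nec}). None of this --- the classes $\phi$ and $\zeta$, the Jacobi identity, the stable inputs $\langle\nu,\eta,\zeta\rangle=0$ (proved via Bott periodicity and the $J$-homomorphism) and $\langle\sigma,2\sigma,\zeta\rangle=\nu^2\bar\kappa$ --- appears in your sketch. ``Bringing in $\bar\kappa$ through $\pi_{30}^{10}=\mathbb Z_8\{\bar\kappa_{10}\}\oplus\mathbb Z_8\{\beta'\}$'' is not a substitute: that group enters the paper only to control the middle term $\{\eta_{10},\zeta_{11},\sigma_{22}\}$ of the Jacobi identity, not to produce the answer. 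So the proposal has a genuine gap at its core; the indeterminacy bookkeeping you identify as the main obstacle is real but secondary, and several of the relations you cite (e.g.\ \eqref{pret29si30}, Lemma \ref{bm3ka20}) are used elsewhere in the paper, not in this proof.
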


By \eqref{w8}, $E^2(\sigma'\delta_{14})$,
$E^2(\sigma'\bar\sigma_{14}')$ and
$E^2(\sigma'\bar\mu_{14}\sigma_{31})$ are equal to
$2\sigma_9\delta_{16}$, $2\sigma_9\bar\sigma_{16}'$ and
$2\sigma_9\bar\mu_{16}\sigma_{33}$, respectively. By
\eqref{Oda_p4009}, these elements are $0$. Then Theorem \ref{main1}
gives $\bar\nu_9\nu_{17}\bar\kappa_{20}=0$: it is suffice to show
Theorem \ref{main1} to obtain Theorem \ref{IM}\ref{IM_p4009}.

\begin{lem}
  \label{l_nezs}
  $\langle\nu,\eta,\zeta\rangle=0$.
\end{lem}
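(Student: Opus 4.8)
The plan is to show first that the bracket is defined and essentially determined, and then to evaluate it by left‑composition with $\eta$.

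The bracket $\langle\nu,\eta,\zeta\rangle$ is defined because $\eta\nu=0$ (the $4$-stem vanishes) and $\eta\zeta=0$ by \eqref{et5ze6}; it lies in the stable $16$-stem, which is an elementary abelian $2$-group, and its indeterminacy $\nu\circ\pi_{13}+\pi_5\circ\zeta$ is trivial on $2$-components, since the $5$-stem is $0$ and the $13$-stem carries no $2$-torsion. Hence $\langle\nu,\eta,\zeta\rangle$ is a single element, say $x$.

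Next I would compose on the left with $\eta$. The shuffling identity for Toda brackets gives $\eta\circ\langle\nu,\eta,\zeta\rangle=\pm\langle\eta,\nu,\eta\rangle\circ\zeta$, and \eqref{nu6nu9} tells us $\langle\eta,\nu,\eta\rangle\ni\nu^2$ (with zero indeterminacy in this range), so $\eta x=\pm\nu^2\zeta$. Since $\nu\zeta=0$ in the $14$-stem, a fortiori $\nu^2\zeta=0$ in the $17$-stem, and therefore $\eta x=0$; that is, $x$ lies in the kernel of left multiplication by $\eta$ on the $16$-stem.

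It then remains to pass from $\eta x=0$ to $x=0$. If $\eta$ acts injectively on the $16$-stem this is immediate; otherwise its kernel is a single $\mathbb{Z}_2$ spanned by a class of image‑of‑$J$ type, and I would rule out that last possibility by a second test: either compose $x$ with $\nu$, placing $\nu x$ in $\langle\nu^2,\eta,\zeta\rangle$ inside the $19$-stem and reducing this, by a further juggle with the $\langle\nu,\eta,\nu\rangle$-type relation, to composites of the form $\nu^2\cdot(\text{image of }J)$ and $\sigma\zeta$ that vanish by the relations of Section~2; or re-express $\zeta$ itself as a three‑fold bracket of image‑of‑$J$ classes and apply the $3\times3$-lemma to rewrite $\langle\nu,\eta,\zeta\rangle$ as a composite lying in a stem where Section~2 forces the value to be $0$.

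The main obstacle is exactly this last step: the shuffle only locates $x$ in a proper, possibly nonzero, coset, and upgrading that to an equality with $0$ requires controlling the indeterminacies of the auxiliary (possibly matric) Toda brackets in the $14$-, $17$- and $19$-stems, together with the supporting vanishing facts quoted from Toda and Oda, rather than any single slick manipulation.
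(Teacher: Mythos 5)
Your setup is fine and agrees with the paper: the bracket is defined since $\nu\eta=0$ and $\eta\zeta=0$ (cf.\ \eqref{et5ze6}), and its indeterminacy $\nu\circ\pi_{13}^s+\pi_5^s\circ\zeta$ vanishes because both stems are trivial $2$-primarily, so $\langle\nu,\eta,\zeta\rangle$ is a single element $x$. The shuffle $\eta x=-\langle\eta,\nu,\eta\rangle\circ\zeta=\mp\nu^2\zeta$ is also a legitimate move. But from there the proposal stops being a proof. You assert $\nu\zeta=0$ in $\pi_{14}^s$ without justification (it is true, but it is not among the relations you have on hand, and you need it --- or at least that $\nu\zeta$ has no $\kappa$-component --- since $\nu\kappa\neq0$ in $\pi_{17}^s$). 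More seriously, the decisive step, passing from $\eta x=0$ to $x=0$, is left as a disjunction: ``if $\eta$ acts injectively on the $16$-stem\dots otherwise\dots'', with the alternative branch consisting of vague plans (``a further juggle'', ``apply the $3\times3$-lemma'') whose indeterminacies you admit you have not controlled. You correctly identify this as the main obstacle, which is to say the proof is not finished. For the record, your route can be completed: $\pi_{16}^s=\mathbb Z_2\{\omega\}\oplus\mathbb Z_2\{\eta\rho\}$ with $\eta\rho=\sigma\mu$, and $\eta\omega=\varepsilon^\ast$, $\eta\sigma\mu=\sigma\eta\mu$ are independent generators of $\pi_{17}^s$ by Toda's Theorems 12.16--12.17, so $\eta$ is indeed injective on the $2$-primary $16$-stem; but none of this is in your write-up.

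The paper's proof avoids the whole issue by working upstairs in $SO(n)$. Since $\zeta_n=J\zeta_n'$ with $\zeta_n'$ generating $\pi_{11}(SO(n))\cong\mathbb Z$, one forms $\{\zeta_n',\eta_{11},\nu_{12}\}\subset\pi_{16}(SO(n))\cong\mathbb Z_2$; composing with $\eta_{16}$ and using $\{\eta,\nu,\eta\}=\nu^2$ lands the result in $\pi_{14}(SO(n))\circ\nu_{14}=0$ for the trivial reason $\pi_{14}(SO(n))=0$ (Bott periodicity), and Kervaire's lemma says $\eta_{16}^\ast\colon\pi_{16}(SO(n))\to\pi_{17}(SO(n))$ is an isomorphism, so the bracket already vanishes in $SO(n)$. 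Pushing down by $J$ then gives $0\in\langle\nu,\eta,\zeta\rangle$. In other words, the two facts you would have to fight for in the stable stems (vanishing of the $\eta$-composite and injectivity of $\eta$) come for free from Bott periodicity once the problem is lifted along $J$. If you want to salvage your stable-stem argument, supply the computation of $\eta\cdot\colon\pi_{16}^s\to\pi_{17}^s$ and a reference for $\nu\zeta=0$; otherwise adopt the $J$-homomorphism detour.
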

\begin{proof}
  In this proof, we refer to the Bott periodicity theorem
  \cite{Bot59}: if $k+1<n$, $\pi_k(SO(n))$ is isomorphic to
  $\mathbb Z_2$, $\mathbb Z_2$, $0$, $\mathbb Z$, $0$, $0$, $0$ and
  $\mathbb Z$ for $k\equiv0$, $1$, $2$, $3$, $4$, $5$, $6$ and
  $7\bmod8$, respectively. For a sufficiently large $n$, we have
  $\zeta_n=J\zeta_n'$, where $\zeta_n'$ is the generator of
  $\pi_{11}(SO(n))\cong\mathbb Z$ and
  $J\colon\pi_{11}(SO(n))\rightarrow\pi_{n+11}(S^n)$ is the
  $J$-homomorphism. A Toda bracket
  $\{\zeta_n',\eta_{11},\nu_{12}\}\subset
  \pi_{16}(SO(n))\cong\mathbb Z_2$
  is well defined because $\zeta_n'\eta_{11}\in\pi_{12}(SO(n))=0$ and
  $\eta_5\nu_6=0$, by \eqref{w4}. By \eqref{nu6nu9} and
  $\pi_{14}(SO(n))=0$, we have
  \begin{equation*}
    \{\zeta_n',\eta_{11},\nu_{12}\}\circ\eta_{16}
    =-(\zeta_n'\circ\{\eta_{11},\nu_{12},\eta_{15}\})
    =-\zeta_n'\nu_{11}^2
    \in\pi_{14}(SO(n))\circ\nu_{14}
    =0.
  \end{equation*}
  By \cite[Lemma 2]{Ker60},
  $\eta_{16}^\ast\colon\pi_{16}(SO(n))\rightarrow\pi_{17}(SO(n))$ is
  an isomorphism, and hence $\{\zeta_n',\eta_{11},\nu_{12}\}=0$. Then,
  by \cite[Lemma 5.1]{MMpre}, we have
  \begin{equation*}
    \{\zeta_n,\eta_{n+11},\nu_{n+12}\}
    \supset(-1)^nJ\{\zeta_n',\eta_{11},\nu_{12}\}=0.
  \end{equation*}
  Therefore, groups $\pi_5^s=0$ and $\pi_{13}^s=0$ (see
  \cite[Proposition 5.9, Theorem 7.7]{Tod62}) shows that
  $\langle\zeta,\eta,\nu\rangle=\langle\nu,\eta,\zeta\rangle\ni0
  \bmod\nu\circ\pi_{13}^s+\pi_5^s\circ\zeta=0$.
\end{proof}

Toda brackets $\{\nu_6,\eta_9,\zeta_{10}\}_1$,
$\{\eta_{10},\zeta_{11},\sigma_{22}\}$ and
$\{\zeta_{11},\sigma_{22},2\sigma_{29}\}$ are well defined:
$\nu_6\eta_9=0$ by \eqref{w5}; $\eta_5\zeta_6=0$ by \eqref{et5ze6};
$\zeta_{11}\sigma_{22}=0$ by \eqref{ze10si21}; and $2\sigma_{16}^2=0$
by \eqref{w15}.

\begin{lem}
  \label{l_nez}
  $\{\nu_6,\eta_9,\zeta_{10}\}\ni\zeta'
  \bmod\eta_6\bar\varepsilon_7,2\zeta'$
  and
  $\{\nu_7,\eta_{10},\zeta_{11}\}
  \ni\sigma'\eta_{14}\varepsilon_{15}
  \bmod\eta_7\bar\varepsilon_8$.
\end{lem}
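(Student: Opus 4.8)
The plan is to determine both Toda brackets by suspending to stable range and then pulling back, using the structure of the relevant stable stems. For $\{\nu_6,\eta_9,\zeta_{10}\}$, I would first observe that $E\{\nu_6,\eta_9,\zeta_{10}\}$ lands in $\pi_{31}^7$, but really the efficient route is to pass all the way to the stable bracket $\langle\nu,\eta,\zeta\rangle$, which by Lemma \ref{l_nezs} is $0$. So the unstable bracket $\{\nu_6,\eta_9,\zeta_{10}\}_1$ is a coset of a subgroup that maps to $0$ stably; the candidate elements $\zeta'$ (the generator of the unstable part seen via $E\zeta'=\theta'$-type relations, or rather $\zeta'\in\pi_{22}^6$ with appropriate suspension behaviour) and $\eta_6\bar\varepsilon_7$ are both in the kernel of suspension to the stable stem after enough suspensions. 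The claim $\{\nu_6,\eta_9,\zeta_{10}\}\ni\zeta'$ should follow by identifying the indeterminacy $\nu_6\circ\pi_{25}^9+\pi_{23}^6\circ\zeta_{17}$ (or the appropriate level-1 version), checking it equals $\{\eta_6\bar\varepsilon_7,2\zeta'\}$ or a subgroup thereof, and then pinning down the bracket by a Hopf-invariant or $EHP$ argument: $H$ applied to $\{\nu_6,\eta_9,\zeta_{10}\}$ should hit $\pm\{H\nu_6,\eta,\zeta\}$-type expression or a secondary composition in $\pi_*^{11}$ that is known, forcing the $\zeta'$ component to be present.

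For the second bracket $\{\nu_7,\eta_{10},\zeta_{11}\}$, I would proceed analogously but one suspension up. The element $\sigma'\eta_{14}\varepsilon_{15}$ lives in $\pi_{31}^7$; note $\sigma'\eta_{14}\varepsilon_{15}$ can be rewritten using $\eta_7\sigma_8 = \bar\nu_7+\varepsilon_7+\sigma'\eta_{14}$ from \eqref{et7si8}, so $\sigma'\eta_{14}\varepsilon_{15} = \eta_7\sigma_8\varepsilon_{15} - \bar\nu_7\varepsilon_{15} - \varepsilon_7\varepsilon_{15}$, and then $\varepsilon_7\varepsilon_{15}=0$ (stably, by \eqref{ep9ep17}) and $\bar\nu_7\varepsilon_{15}=0$ (by $\bar\nu_6\varepsilon_{14}=0$, cited after \eqref{nu7b'}), so actually $\sigma'\eta_{14}\varepsilon_{15} = \eta_7\sigma_8\varepsilon_{15} = \eta_7\varepsilon_8\sigma_{16}$ modulo the relevant terms — I would use this to relate the bracket to something more tractable. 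The containment $\{\nu_7,\eta_{10},\zeta_{11}\} \ni \sigma'\eta_{14}\varepsilon_{15}$ modulo $\eta_7\bar\varepsilon_8$ should then come from the first part by suspension: $E\{\nu_6,\eta_9,\zeta_{10}\} \subset \{\nu_7,\eta_{10},\zeta_{11}\}$ up to sign, $E\zeta' = \theta'\varepsilon$-related but more to the point $E\zeta'$ should equal $\sigma'\eta_{14}\varepsilon_{15}$ or differ from it by a suspension of the indeterminacy, and $E(\eta_6\bar\varepsilon_7) = \eta_7\bar\varepsilon_8$. I need to check the indeterminacy shrinks appropriately and that the $2\zeta'$ ambiguity dies under suspension.

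The main obstacle I anticipate is controlling the indeterminacies precisely — specifically computing $\nu_6\circ\pi_{25}^9$ and $\pi_{23}^6\circ\zeta_{17}$ (and their level-$1$ analogues for the first bracket) and likewise $\nu_7\circ\pi_{26}^{10}$, $\pi_{24}^7\circ\zeta_{18}$ for the second, to show they equal exactly the stated modulus subgroups $\{\eta_6\bar\varepsilon_7, 2\zeta'\}$ and $\{\eta_7\bar\varepsilon_8\}$. This requires knowing the groups $\pi_{25}^9$, $\pi_{23}^6$, etc., from the tables in \cite{Tod62} and the later papers, and checking which composites with $\nu$ or $\zeta$ survive. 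A secondary difficulty is establishing the non-vanishing, i.e.\ that $\zeta'$ (resp.\ $\sigma'\eta_{14}\varepsilon_{15}$) is genuinely \emph{in} the bracket and not merely that the bracket is contained in that coset; for this I would use a juggling/shuffle relation, e.g.\ moving to $\{\eta_9,\zeta_{10},\sigma_{22}\}$ or $\{\nu_6,\eta_9,\zeta_{10}\}\circ\sigma$ and comparing with a known value via \eqref{ze10si21} and the relation $\eta_9\sigma_{10}=\bar\nu_9+\varepsilon_9$, or an $EHP$-sequence computation of the Hopf invariant of the bracket. Once both bracket containments are nailed down, the verification $E(\sigma'\eta_{14}\varepsilon_{15})$-type relations fall out by routine suspension.
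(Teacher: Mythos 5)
Your overall strategy --- compute the indeterminacy, detect the $\zeta'$ component by an $EHP$/Hopf-invariant argument, suspend to get the second bracket via $E\zeta'=\sigma'\eta_{14}\varepsilon_{15}$, and invoke Lemma \ref{l_nezs} --- is the same outline as the paper's proof. But as written the plan has genuine gaps at exactly the points where the work is. First, the indeterminacy is misidentified: for $\{\nu_6,\eta_9,\zeta_{10}\}$ (with $\nu_6\in\pi_9^6$, $\zeta_{10}\in\pi_{21}^{10}$, bracket in $\pi_{22}^6$) it is $\nu_6\circ\pi_{22}^9+\pi_{11}^6\circ\zeta_{11}$, not $\nu_6\circ\pi_{25}^9+\pi_{23}^6\circ\zeta_{17}$; the paper evaluates it as $\{\nu_6\sigma_9\nu_{16}^2,P\zeta_{13}\}=\{\eta_6\bar\varepsilon_7,2\zeta'\}$ using \eqref{ep3ep11} and $P\zeta_{13}=\pm2\zeta'$. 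Second, the decisive step is left as a hope (``$H$ ... should hit ... a secondary composition that is known''). The actual mechanism is Toda's Proposition 2.6: since $\nu_6=E\nu_5$ and $\nu_5\eta_8=[\iota_5,\iota_5]=P\iota_{11}$, one gets $H\{\nu_6,\eta_9,\zeta_{10}\}_1=-P^{-1}(\nu_5\eta_8)\circ\zeta_{11}=\{a\zeta_{11}\mid a\text{ odd}\}$, and then $H\zeta'\equiv\zeta_{11}\bmod2\zeta_{11}$ together with the structure of $\pi_{22}^6$ forces the bracket to contain $\zeta'+c\mu_6\sigma_{15}$ for some $c\in\{0,1\}$. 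Without identifying this, the containment $\zeta'\in\{\nu_6,\eta_9,\zeta_{10}\}$ is not established.

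Third, and most importantly, you have the role of Lemma \ref{l_nezs} backwards. Both $\zeta'$ and $\eta_6\bar\varepsilon_7$ are stably trivial, so the vanishing of $\langle\nu,\eta,\zeta\rangle$ gives no information about whether $\zeta'$ lies in the bracket; what it does do is kill the residual summand $c\mu_6\sigma_{15}$ left over from the Hopf-invariant argument, because $\mu\sigma=\sigma\mu\neq0$ in the stable range while $E^2(\sigma'\eta_{14}\varepsilon_{15})=\sigma_9\eta_{16}\circ2\varepsilon_{17}=0$. Your sketch never surfaces the $\mu_6\sigma_{15}$ ambiguity at all, so the one place where the stable lemma is indispensable is missing from the plan. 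Finally, a smaller error: $\varepsilon_7\varepsilon_{15}$ is not $0$ at that level --- by \eqref{ep3ep11} it equals $\eta_7\bar\varepsilon_8$, which is precisely one of the nonzero mod terms in the statement; only $\varepsilon_9^2=0$ holds after two suspensions. That rewriting of $\sigma'\eta_{14}\varepsilon_{15}$ is tangential to your main line, but as stated it is false.
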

\begin{proof}
  By \cite[Proposition 5.9, Theorem 7.7]{Tod62}, we have
  $\pi_{11}^6=\{P\iota_{13}\}$, $\pi_{12}^7=0$,
  $\pi_{n+13}^n=\{\sigma_n\nu_{n+7}^2\}$ for $n=9,10$ and
  $E\pi_{21}^8=\pi_{22}^9$. The last equation implies that the Toda
  bracket $\{\nu_6,\eta_9,\zeta_{10}\}$ is equal to
  $\{\nu_6,\eta_9,\zeta_{10}\}_1$. The indeterminacy of
  $\{\nu_6,\eta_9,\zeta_{10}\}$ is
  $\nu_6\circ\pi_{22}^9+\pi_{11}^6\circ\zeta_{11}
  =\{\nu_6\sigma_9\nu_{16}^2,P\zeta_{13}\}$.
  Here, $\nu_6\sigma_9\nu_{16}^2=\eta_6\bar\varepsilon_7$ by
  \eqref{ep3ep11} and $P\zeta_{13}=\pm2\zeta'$ by
  \cite[(12.4)]{Tod62}. The indeterminacy of
  $\{\nu_7,\eta_{10},\zeta_{11}\}$ is obtained in the same way. By
  \eqref{w5}, we have $P\iota_{11}=\nu_5\eta_8$ and hence the kernel
  of $P\colon\pi_{11}^{11}\rightarrow\pi_9^5
  =\mathbb Z_2\{\nu_5\eta_8\}$
  (see \cite[Proposition 5.8]{Tod62}) is generated by $2\iota_{11}$.
  Then an equation
  \begin{equation*}
    H\{\nu_6,\eta_9,\zeta_{10}\}_1
    =-P^{-1}(\nu_5\eta_8)\circ\zeta_{11}
    =\{a\zeta_{11}\mid a\text{ is odd}\}
  \end{equation*}
  is obtained by the use of \cite[Proposition 2.6]{Tod62}. Since
  $\pi_{22}^{11}=\mathbb Z_8\{\zeta_{11}\}$,
  $\pi_{22}^6=\mathbb Z_8\{\zeta'\}\oplus
  \mathbb Z_2\{\mu_6\sigma_{15}\}\oplus
  \mathbb Z_2\{\eta_6\bar\varepsilon_7\}$,
  and $H\zeta'\equiv\zeta_{11}\bmod2\zeta_{11}$ (see
  \cite[Theorems 7.4, 12.6, Lemma 12.1]{Tod62}), we have
  \begin{equation*}
    H^{-1}\{a\zeta_{11}\mid a\text{ is odd}\}=
    \{b\zeta'+c\mu_6\sigma_{15}+d\eta_6\bar\varepsilon_7\mid
    b=1,3,5,7,\ c=0,1,\ d=0,1\}.
  \end{equation*}
  This yields
  \begin{equation*}
    \{\nu_6,\eta_9,\zeta_{10}\}\ni\zeta'+c\mu_6\sigma_{15}
  \end{equation*}
  for $c=0$ or $1$, because the indeterminacy of
  $\{\nu_6,\eta_9,\zeta_{10}\}$ is
  $\{\eta_6\bar\varepsilon_7,2\zeta'\}$ as above. Therefore, relations
  $E\zeta'=\sigma'\eta_{14}\varepsilon_{15}$ and $2\varepsilon_{15}=0$
  (see \cite[(12.4), Theorem 7.1]{Tod62}) give
  \begin{equation*}
    \{\nu_7,\eta_{10},\zeta_{11}\}
    \supset-E\{\nu_6,\eta_9,\zeta_{10}\}
    \ni-E\zeta'-c\mu_7\sigma_{16}
    =\sigma'\eta_{14}\varepsilon_{15}+c\mu_7\sigma_{16}.
  \end{equation*}
  By \eqref{w8}, we have
  $E^2(\sigma'\eta_{14}\varepsilon_{15})=
  \sigma_9\eta_{16}\circ2\varepsilon_{17}=0$
  and hence $\langle\nu,\eta,\zeta\rangle\ni c\mu\sigma$. Since
  $\mu\sigma=\sigma\mu\neq0$ (see \cite[p.~156,
  Theorem 12.16]{Tod62}), Lemma \ref{l_nezs} leads to $c=0$.
\end{proof}

\begin{lem}
  \label{l_ezs}
  $\nu_7\circ\{\eta_{10},\zeta_{11},\sigma_{22}\}=0$ and
  $\{\eta_{10},\zeta_{11},\sigma_{22}\}\circ\sigma_{30}=0$.
\end{lem}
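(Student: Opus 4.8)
The plan is to prove the two identities $\nu_7\circ\{\eta_{10},\zeta_{11},\sigma_{22}\}=0$ and $\{\eta_{10},\zeta_{11},\sigma_{22}\}\circ\sigma_{30}=0$ separately, in each case first pinning down the ambient group in which the relevant Toda bracket lives, and then using the triple-product shuffle relation to move the outer factor inside. For the first identity, I would use the standard formula $\nu_7\circ\{\eta_{10},\zeta_{11},\sigma_{22}\}\subset\{\nu_7,\eta_{10},\zeta_{11}\}\circ\sigma_{33}$ (up to sign and indeterminacy), together with Lemma~\ref{l_nez}, which tells us $\{\nu_7,\eta_{10},\zeta_{11}\}\ni\sigma'\eta_{14}\varepsilon_{15}\bmod\eta_7\bar\varepsilon_8$. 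Composing on the right with $\sigma_{33}$, the term $\sigma'\eta_{14}\varepsilon_{15}\sigma_{33}$ vanishes because $\varepsilon_{15}\sigma_{23}=0$ by \eqref{ep3si11} (suspended), and the indeterminacy term $\eta_7\bar\varepsilon_8\sigma_{33}$ vanishes because $\bar\varepsilon_8\sigma_{32}=\bar\varepsilon_9\sigma\ldots$ — more precisely because $\bar\varepsilon_3\eta_{18}=\eta_3\bar\varepsilon_4$ by \eqref{ep3ep11} forces $\bar\varepsilon$ to factor through $\eta$ after one suspension, and $\eta\sigma$-type products in this stem are already known to vanish; alternatively $\bar\varepsilon_6\sigma_{14}$-style relations from \cite{Tod62} handle it. The remaining indeterminacy coming from $\nu_7\circ(\text{indeterminacy of }\{\eta_{10},\zeta_{11},\sigma_{22}\})$ I would check lands in a group of elements already killed by the relations in Section~2 (it will involve $\nu_7\eta_{10}\pi_*$, which is zero since $\nu_7\eta_{10}=0$, and $\nu_7\pi_{22}^7\sigma_{30}$-type terms).

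For the second identity, I would symmetrically use $\{\eta_{10},\zeta_{11},\sigma_{22}\}\circ\sigma_{30}\subset\pm\eta_{10}\circ\{\zeta_{11},\sigma_{22},2\sigma_{29}\}$ — here one must be careful: the correct shuffle is $\{\eta_{10},\zeta_{11},\sigma_{22}\}\circ\sigma_{30}=-\eta_{10}\circ\{\zeta_{11},\sigma_{22},\sigma_{29}\}$ only when $\sigma_{22}\sigma_{29}=0$, which is false, so instead I expect to use the relation that ties $\{\eta_{10},\zeta_{11},\sigma_{22}\}\circ\sigma_{30}$ to $\{\eta_{10},\zeta_{11},\sigma_{22}\sigma_{29}\}$, or to compute via $\{\eta_{10},\zeta_{11},\sigma_{22}\}$ directly. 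A cleaner route: since $\zeta_{10}\sigma_{21}=[\iota_{10},\mu_{10}]$ by \eqref{ze10si21} and $2\sigma_{16}^2=[\iota_{15},\iota_{15}]$-type relations hold, the bracket $\{\eta_{10},\zeta_{11},\sigma_{22}\}$ should be expressible modulo known indeterminacy, and one reduces $\{\eta_{10},\zeta_{11},\sigma_{22}\}\sigma_{30}$ to evaluating $\eta_{10}$ composed with a bracket $\{\zeta_{11},\sigma_{22},2\sigma_{29}\}$ (well-defined since $\zeta_{11}\sigma_{22}=0$ and $2\sigma_{16}^2=0$). That latter bracket lies in $\pi_{40}^{11}$, and composing with $\eta_{10}$ on the left lands in $\pi_{41}^{10}$; I would show the result is zero either because $\{\zeta_{11},\sigma_{22},2\sigma_{29}\}$ itself is $\eta$-divisible-free or because $\eta_{10}$ annihilates the relevant subgroup. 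Throughout, the relations $\eta_9\sigma_{10}^2=0$ from \eqref{et9si10si17}, $\eta_5\bar\sigma_6=0$ from \eqref{et5bs6}, and the vanishing products in \eqref{ep3si11} and \eqref{ep9ep17} will be the workhorses for killing the explicit terms.

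The main obstacle I anticipate is the careful bookkeeping of indeterminacies. Each Toda bracket here has a nontrivial indeterminacy, and after composing with $\nu_7$ or $\sigma_{30}$ one must verify that the entire coset — not just one representative — is zero. This requires knowing the structure of several homotopy groups in the 28–34 range of stems (such as $\pi_{37}^{11}$, $\pi_{40}^{11}$, and various $\pi_*^7$, $\pi_*^{10}$), citing \cite{Tod62, Mim65, Oda79}, and checking that $\nu_7$ (respectively right composition with $\sigma_{30}$) kills each summand. A secondary subtlety is getting the signs and the precise shuffle formula right when the "outer-outer" composite ($\sigma_{22}\sigma_{29}$ or $\nu_7\eta_{10}$) is or is not zero: when it is nonzero one picks up an extra term $\{\eta_{10},\zeta_{11}\sigma_{22},\sigma_{29}\}$ or similar, but since $\nu_7\eta_{10}=0$ the first composite is clean, and for the second I expect to route through $2\sigma_{29}$ precisely to make the relevant product vanish. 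Once the indeterminacy calculus is under control, each explicit representative dies for an elementary reason already recorded in Section~2, so the heart of the proof is organizational rather than requiring any new relation.
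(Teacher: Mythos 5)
Your first half is essentially the paper's argument: the shuffle $\nu_7\circ\{\eta_{10},\zeta_{11},\sigma_{22}\}=-(\{\nu_7,\eta_{10},\zeta_{11}\}\circ\sigma_{23})$ (note the index is $\sigma_{23}$, not $\sigma_{33}$), then Lemma~\ref{l_nez}, then $\varepsilon_{15}\sigma_{23}=0$ from \eqref{ep3si11}. The one loose end is the term $\eta_7\bar\varepsilon_8\sigma_{23}$: the relation actually needed is $\bar\varepsilon_3\sigma_{18}=0$, which is \cite[(2.4)]{MMO75}, and it is not extractable from $\bar\varepsilon_3\eta_{18}=\eta_3\bar\varepsilon_4$ as you suggest; your justification there does not close, though the fact itself is available. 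Your worry about a leftover indeterminacy contribution is unfounded, since the shuffle is an equality of subsets, so the entire coset $\nu_7\circ\{\eta_{10},\zeta_{11},\sigma_{22}\}$ is already accounted for.

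The second half is where the genuine gap lies. As you yourself observe, the shuffle $\{\eta_{10},\zeta_{11},\sigma_{22}\}\circ\sigma_{30}=-\eta_{10}\circ\{\zeta_{11},\sigma_{22},\sigma_{29}\}$ is unavailable because $\sigma_{22}\sigma_{29}\neq0$, and routing through $2\sigma_{29}$ only controls $\{\eta_{10},\zeta_{11},\sigma_{22}\}\circ2\sigma_{30}$, not the composite you need; none of your fallback options supplies a valid replacement. The mechanism the paper uses instead --- and which your proposal does not contain --- is to turn the first half into a constraint on the bracket itself. One has $\{\eta_{10},\zeta_{11},\sigma_{22}\}\subset\pi_{30}^{10}=\mathbb Z_8\{\bar\kappa_{10}\}\oplus\mathbb Z_8\{\beta'\}$ by \eqref{p3010}; since $\nu_7\bar\kappa_{10}$ has order $8$ (\cite[Theorem 1.1(a)]{MMO75}) while $\nu_7\beta'=0$ by \eqref{nu7b'}, the vanishing of $\nu_7\circ\{\eta_{10},\zeta_{11},\sigma_{22}\}$ forces the bracket into the summand $\mathbb Z_8\{\beta'\}$, and then $\beta'\sigma_{30}=0$ from \eqref{b'si30} kills the composite with $\sigma_{30}$. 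Without identifying $\pi_{30}^{10}$ and these three facts about $\bar\kappa_{10}$ and $\beta'$, ``computing the bracket directly'' or hoping that the relevant subgroup is annihilated does not yield a proof.
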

\begin{proof}
  By Lemma \ref{l_nez}, we have
  \begin{equation*}
    \nu_7\circ\{\eta_{10},\zeta_{11},\sigma_{22}\}
    =-(\{\nu_7,\eta_{10},\zeta_{11}\}\circ\sigma_{23})
    \subset\{\sigma'\eta_{14}\varepsilon_{15},
    \eta_7\bar\varepsilon_8\}\circ\sigma_{23}.
  \end{equation*}
  Then \eqref{ep3si11} and
  \begin{equation}
    \label{be3si18}
    \bar\varepsilon_3\sigma_{18}=0,
    \quad\text{\cite[(2.4)]{MMO75},}
  \end{equation}
  give the first half. By \eqref{p3010},
  $\{\eta_{10},\zeta_{11},\sigma_{22}\}\subset
  \mathbb Z_8\{\bar\kappa_{10}\}\oplus\mathbb Z_8\{\beta'\}$.
  We have $\nu_7\bar\kappa_{10}$ is of order $8$ by
  \cite[Theorem 1.1(a)]{MMO75}, and $\nu_7\beta'=0$ by \eqref{nu7b'}.
  Then the first half shows
  $\{\eta_{10},\zeta_{11},\sigma_{22}\}\subset\mathbb Z_8\{\beta'\}$.
  Hence, by \eqref{b'si30}, we obtain the second half.
\end{proof}

By Lemma \ref{l_ezs}, we may consider the Jacobi identity:
\begin{equation}
  \label{e_jac1}
  \begin{split}
    \{\{\nu_7,\eta_{10},\zeta_{11}\},\sigma_{23},2\sigma_{30}\}
    & +\{\nu_7,\{\eta_{10},\zeta_{11},\sigma_{22}\},2\sigma_{30}\}\\
    & +\{\nu_7,\eta_{10},\{\zeta_{11},\sigma_{22},2\sigma_{29}\}\}
      \ni0.
  \end{split}
\end{equation}
As a preparation, we show the next lemma, which is followed by
observations of Toda brackets in \eqref{e_jac1}.

\begin{lem}
  \label{l_ind}
  $\pi_{31}^7\circ2\sigma_{31}=0$ and $\nu_7\circ\pi_{38}^{10}=0$.
\end{lem}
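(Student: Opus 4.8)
The plan is to compute the two composites by running through the known structure of the relevant homotopy groups $\pi_{31}^7$ and $\pi_{38}^{10}$, generator by generator, and checking each contribution vanishes. For the first assertion, $\pi_{31}^7\circ 2\sigma_{31}=0$, I would first invoke Toda's (or Mimura's, or Oda's) determination of $\pi_{31}^7$; write its generators in the standard notation. Each generator is a composite of lower-stem elements, so I would factor out the trailing $2\sigma_{31}$ and use the $2$-divisibility relations already recalled in Section~2 — chiefly $2\varepsilon_n=0$, $2\bar\nu_n=0$, $2\kappa_n=0$, $2\sigma_n^2=[\iota_n,\iota_n]$ from \eqref{w15}, together with \eqref{ep3si11} and \eqref{be3si18} which kill the $\varepsilon$- and $\bar\varepsilon$-type tails composed with $\sigma$. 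For any generator whose tail already has order $2$, the composite with $2\sigma_{31}$ is automatically $0$; for the few generators ending in a $\sigma$ or $\iota$ where $2\sigma_{31}$ survives, I would push the $2$ leftward and land in a group that is elementary abelian or zero in the appropriate stem. The EHP sequence recalled at the end of Section~1, and the monomorphism statements \eqref{mono}, can be used to reduce to a stabler range if a direct computation is awkward.

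For the second assertion, $\nu_7\circ\pi_{38}^{10}=0$, the approach is dual: recall the structure of $\pi_{38}^{10}$ (this is in the $28$-stem, covered by Oda or the earlier literature cited), list its generators, and precompose each with $\nu_7$. The key tools here are $\nu_7\beta'=0$ from \eqref{nu7b'}, $\beta'\sigma_{30}=0$ from \eqref{b'si30}, $\nu_6\bar\nu_9=\nu_6\varepsilon_9=[\iota_6,\nu_6^2]$ from \eqref{nu6ep9} which controls $\nu$ hitting the $\bar\nu+\varepsilon$ family, and the $\eta$-$\nu$ relations \eqref{w4}, \eqref{w5} that make $\nu_7\eta_{10}=0$-type cancellations possible. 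Any generator of $\pi_{38}^{10}$ that factors through $\eta_{10}$ or $\eta_{11}$ dies immediately against $\nu_7$ once one commutes past to $\nu_7\eta_{10}=0$; generators that factor through the $\sigma$-family are handled by \eqref{ep3si11}, \eqref{et9si10si17}; and the $\bar\kappa$-, $\beta'$-type generators are exactly the ones addressed by \eqref{nu7b'} and the order computation of $\nu_7\bar\kappa_{10}$ already used in Lemma~\ref{l_ezs}.

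The main obstacle I anticipate is not any single clever bracket manipulation but the bookkeeping: $\pi_{31}^7$ and $\pi_{38}^{10}$ are fairly large groups, and one must be certain the generator lists are complete and correctly normalized to the conventions of \cite{Tod62, MT63, Mim65, MMO75, Oda79}. The genuinely delicate cases will be any generator built from $\sigma$ composed with itself or with a torsion-free-looking factor, where $2\sigma$ does not vanish on the nose and one must identify the Whitehead product $[\iota,\iota]$ that appears (via \eqref{w8}, \eqref{w9}, \eqref{w11}, \eqref{w15}) and then argue that \emph{this} Whitehead product, post- or pre-composed appropriately, lies in a trivial group. I would isolate those one or two cases, dispatch the rest by the blanket "tail has order $2$" observation, and finish the hard cases with the EHP sequence together with the vanishing results of Section~2.
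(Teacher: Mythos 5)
Your overall strategy --- list the generators of the two groups and kill each composite one at a time --- is indeed the strategy of the paper, and your treatment of the easy cases would go through: the first half is in fact even simpler than you suggest, since $\pi_{31}^7\cong(\mathbb Z_2)^7$ by \cite[Theorem 1.1(b)]{MMO75}, so the whole group is annihilated by $2$ and no generator-by-generator analysis is needed; and the generators $\bar\nu_{10}\bar\kappa_{18}$, $\varepsilon_{10}\bar\kappa_{18}$ and $\sigma_{10}^4$ of $\pi_{38}^{10}$ die against $\nu_7$ exactly as you say, via \eqref{nu6ep9} and via $\nu_7\sigma_{10}^4=\sigma'\nu_{14}\sigma_{17}^3=0$ from \eqref{nu7si10} and \eqref{w11}. (Your remark about generators factoring through $\eta_{10}$ is vacuous here: none of the generators of $\pi_{38}^{10}$ do.)

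The gap is in the two remaining generators, and your proposed toolkit would not close it. First, $F_1^{(1)}$ is not a composite with $\beta'$ but a Toda bracket $\{F_1,8\iota_{30},2\sigma_{30}\}_1$ with $F_1=x\beta'$, so $\nu_7\beta'=0$ by itself does not kill $\nu_7F_1^{(1)}$; one needs the shuffle
$\nu_7\circ\{x\beta',8\iota_{30},2\sigma_{30}\}
=-\{\nu_7,x\beta',8\iota_{30}\}\circ2\sigma_{31}
\subset\pi_{31}^7\circ2\sigma_{31}=0$,
which is precisely why the first half of the lemma is bundled with the second. Second, and more seriously, $\sigma_{10}\nu_{17}^\ast\nu_{35}$ is not handled by \eqref{ep3si11}, \eqref{et9si10si17}, the EHP sequence, or an identification of a Whitehead product: after rewriting $\nu_7\sigma_{10}$ as an odd multiple of $\sigma'\nu_{14}$ one must compute $\nu_{13}\nu_{16}^\ast$ via $\nu_{16}^\ast\in\{\sigma_{16},2\sigma_{23},\nu_{30}\}_1$ together with Oda's evaluation $\{\nu_{13},\sigma_{16},2\sigma_{23}\}=\xi_{13}+y(\lambda+2\xi_{13})$, then use $\xi_{12}\nu_{30}=\sigma_{12}^3$, $\sigma_{14}^3\nu_{35}=0$ (from \eqref{w11}), $\sigma'E\lambda\equiv0\bmod4E^2\phi''$ and $2\nu_{32}^2=0$. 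These specific inputs from \cite{Oda79} are the substance of the proof, and nothing in your plan points toward them; as written, the delicate case you flag would remain open.
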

\begin{proof}
  Since $\pi_{31}^7\cong(\mathbb Z_2)^7$ (see
  \cite[Theorem 1.1(b)]{MMO75}), the first half of the lemma is
  obvious. The group $\pi_{38}^{10}$ is obtained in
  \cite[Theorem 2(b)]{Oda79}:
  \begin{equation*}
    \pi_{38}^{10}=
    \mathbb Z_8\{F_1^{(1)}\}\oplus
    \mathbb Z_2\{\sigma_{10}^4\}\oplus
    \mathbb Z_2\{\sigma_{10}\nu_{17}^\ast\nu_{35}\}\oplus
    \mathbb Z_2\{\bar\nu_{10}\bar\kappa_{18}\}\oplus
    \mathbb Z_2\{\varepsilon_{10}\bar\kappa_{18}\},
  \end{equation*}
  where $F_1^{(1)}\in\{F_1,8\iota_{30},2\sigma_{30}\}_1$ for
  $F_1\in\{P\iota_{21},\eta_{19}^2,\varepsilon_{21}\}_1$ (see
  \cite[Definition 3.18]{Oda76}). Since $F_1=x\beta'$ for some odd
  integer $x$ (see \cite[I-(2.1)]{Oda79}), \eqref{nu7b'} and the first
  half lead to
  \begin{equation*}
    \nu_7F_1^{(1)}
    \in\nu_7\circ\{x\beta',8\iota_{30},2\sigma_{30}\}
    =-\{\nu_7,x\beta',8\iota_{30}\}\circ2\sigma_{31}
    \subset\pi_{31}^7\circ2\sigma_{31}=0.
  \end{equation*}
  By \eqref{nu7si10} and \eqref{w11},
  \begin{equation*}
    \nu_7\sigma_{10}^4=\sigma'\nu_{14}\sigma_{17}^3=0.
  \end{equation*}
  Since $\nu_{16}^\ast\in\{\sigma_{16},2\sigma_{23},\nu_{30}\}_1$ (see
  \cite[p.~153]{Tod62}) and
  \begin{equation*}
    \{\nu_{13},\sigma_{16},2\sigma_{23}\}=
    \xi_{13}+y(\lambda+2\xi_{13})
  \end{equation*}
  for some odd integer $y$ (see \cite[I-Proposition 3.4(8)]{Oda79}),
  relations \eqref{w15}, \eqref{w11} and
  $\xi_{12}\nu_{30}=\sigma_{12}^3$ (see
  \cite[II-Proposition 2.1(2)]{Oda79}) yield
  \begin{align*}
    \nu_{13}\nu_{16}^\ast
    & \in\nu_{13}\circ\{\sigma_{16},2\sigma_{23},\nu_{30}\}\\
    & =-\{\nu_{13},\sigma_{16},2\sigma_{23}\}\circ\nu_{31}\\
    & =(-\xi_{13}-y(\lambda+2\xi_{13}))\nu_{31}
      =\sigma_{13}^3-y\lambda\nu_{31},
  \end{align*}
  and hence, by \eqref{nu7si10},
  \eqref{w11}, $\sigma'E\lambda\equiv0\bmod4E^2\phi''$ (see
  \cite[II-(6.4)]{Oda79}) and $2\nu_{32}^2=0$ (see
  \cite[Proposition 5.11]{Tod62}), we obtain
  \begin{multline*}
    \nu_7\sigma_{10}\nu_{17}^\ast\nu_{35}
    =\sigma'\nu_{14}\nu_{17}^\ast\nu_{35}
    =\sigma'(\sigma_{14}^3-y(E\lambda)\nu_{32})\nu_{35}
    =\sigma'(E\lambda)\nu_{32}^2
    \equiv0\\
    \bmod4E^2\phi''\circ\nu_{32}^2=E^2\phi''\circ4\nu_{32}^2=0.
  \end{multline*}
  By \eqref{nu6ep9},
  \begin{equation*}
    \nu_7\bar\nu_{10}\bar\kappa_{18}
    =\nu_7\varepsilon_{10}\bar\kappa_{18}=0.
  \end{equation*}
  Therefore, every element which is composite of generators of
  $\pi_{38}^{10}$ and $\nu_7$ is $0$.
\end{proof}

\begin{lem}
  \label{l_ncs}
  $\{\nu_7,\chi,2\sigma_{30}\}=0$ for
  $\chi\in\{\eta_{10},\zeta_{11},\sigma_{22}\}$.
\end{lem}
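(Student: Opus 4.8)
The plan is to realise $\{\nu_7,\chi,2\sigma_{30}\}$ as a single element with trivial indeterminacy, strip it down to its $\beta'$-part, and annihilate that using the fact that $S^7$ is an $H$-space.

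First, by Lemma~\ref{l_ezs} we have $\nu_7\chi=0$ and $\chi\sigma_{30}=0$, so $\chi\circ 2\sigma_{30}=0$ and $\{\nu_7,\chi,2\sigma_{30}\}\subset\pi_{38}^7$ is defined; by Lemma~\ref{l_ind} its indeterminacy $\nu_7\circ\pi_{38}^{10}+\pi_{31}^7\circ 2\sigma_{31}$ vanishes, so it is a single element. Also $\{\eta_{10},\zeta_{11},\sigma_{22}\}\subset\mathbb Z_8\{\beta'\}$ by Lemma~\ref{l_ezs}, so $\chi=m\beta'$ for some integer $m$, and since $\nu_7\beta'=0$ by \eqref{nu7b'} and $\beta'\sigma_{30}=0$ by \eqref{b'si30}, subadditivity of the bracket in its middle entry shows that $\{\nu_7,\beta',2\sigma_{30}\}=0$ would imply $\{\nu_7,m\beta',2\sigma_{30}\}=0$. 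Thus it suffices to prove $\{\nu_7,\beta',2\sigma_{30}\}=0$; write this unique element as $z$.

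Since $\beta'\sigma_{30}=0$, the bracket $\{\nu_7,\beta',\sigma_{30}\}$ is defined with $2$-torsion indeterminacy $\pi_{31}^7\circ\sigma_{31}$, and $\{\nu_7,\beta',\sigma_{30}\}\circ 2\iota_{38}\subset\{\nu_7,\beta',2\sigma_{30}\}$ forces $z=2w$ for a representative $w$ of $\{\nu_7,\beta',\sigma_{30}\}$; comparing indeterminacies one gets likewise $z=2w\in\{\nu_7,2\beta',\sigma_{30}\}$, which has the same $2$-torsion indeterminacy. So it is enough to show $0\in\{\nu_7,2\beta',\sigma_{30}\}$, for then $\{\nu_7,2\beta',\sigma_{30}\}=\pi_{31}^7\circ\sigma_{31}$, hence $z\in\pi_{31}^7\circ\sigma_{31}$ and $2z=0$. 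To see $0\in\{\nu_7,2\beta',\sigma_{30}\}$: by \eqref{b'} and the introductory relation $PE^{n+2}\chi=\pm[\iota_n,\iota_n]E^n\chi$ we have $2\beta'=\pm[\iota_{10},\zeta_{10}]=\pm[\iota_{10},\iota_{10}]\circ\zeta_{19}$; since $S^7$ is an $H$-space the Whitehead product $[\nu_7,\nu_7]$ vanishes, so $\nu_7\circ[\iota_{10},\iota_{10}]=[\nu_7,\nu_7]=0$, and the composite $[\iota_{10},\iota_{10}]\circ\zeta_{19}$ may be slid across the middle entry, giving up to sign $\{\nu_7,[\iota_{10},\iota_{10}],\zeta_{19}\circ\sigma_{30}\}\subset\{\nu_7,2\beta',\sigma_{30}\}$; and $\zeta_{19}\sigma_{30}=E^9[\iota_{10},\mu_{10}]=0$ by \eqref{ze10si21}, so the left-hand side equals $\{\nu_7,[\iota_{10},\iota_{10}],0\}=\nu_7\circ\pi_{38}^{10}=0$ by Lemma~\ref{l_ind}.

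The remaining point --- that $z$ is actually $0$, not merely an element of the $2$-torsion group $\pi_{31}^7\circ\sigma_{31}$ --- is the main obstacle. Because $\{\nu_7,\chi,2\sigma_{30}\}$ already has trivial indeterminacy, $z$ cannot be absorbed, and $\pi_{38}^7$ is not a $2$-group, so a dimension count does not suffice; one must either identify $z$ precisely inside $\pi_{31}^7\circ\sigma_{31}$ from Oda's determination of $\pi_{38}^7$ (or after one further suspension, in a range where the vanishing is visible), or else sharpen Lemma~\ref{l_ezs} to $\{\eta_{10},\zeta_{11},\sigma_{22}\}\subset 4\,\mathbb Z_8\{\beta'\}$ by a Hopf-invariant computation --- which makes $2\chi=0$ and lets the factorisation $2\sigma_{30}=2\iota_{30}\circ\sigma_{30}$ complete the argument directly.
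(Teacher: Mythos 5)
Your setup agrees with the paper's: the indeterminacy is killed by Lemma \ref{l_ind}, and Lemma \ref{l_ezs} reduces everything to $\chi=m\beta'$. Your detour through $2\beta'=\pm[\iota_{10},\zeta_{10}]=\pm[\iota_{10},\iota_{10}]\circ\zeta_{19}$, the $H$-space structure of $S^7$ (so $\nu_7\circ[\iota_{10},\iota_{10}]=[\nu_7,\nu_7]=0$) and $\zeta_{19}\sigma_{30}=E^9[\iota_{10},\mu_{10}]=0$ is a legitimate and rather pretty way to show $0\in\{\nu_7,2\beta',\sigma_{30}\}$, hence that your element $z$ lies in the $2$-torsion group $\pi_{31}^7\circ\sigma_{31}$. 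But, as you yourself concede in the last paragraph, this is where the proof stops: $2z=0$ and $z\in\pi_{31}^7\circ\sigma_{31}$ do not give $z=0$, and neither of your two suggested repairs is carried out. (The second one would not suffice as stated anyway: even with $2\chi=0$ you would only land in $\{\nu_7,0,\sigma_{30}\}=\pi_{31}^7\circ\sigma_{31}$ again.) So there is a genuine gap at precisely the point you identify.

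The paper closes this gap by a different mechanism, which is the real content of the lemma. Writing $\{\nu_7,x\beta',2\sigma_{30}\}\supset\{\nu_7,x\beta',\sigma_{30}\}\circ2\iota_{38}$ places the (unique) element in $2\pi_{38}^7=\{2\alpha_3'''\}\cong\mathbb Z_4$, so it equals $2y\alpha_3'''$. Since $E\beta'=\theta'\varepsilon_{23}$ and $E^2\theta'=0$, the middle entry dies after enough suspensions, so $E^5$ of the bracket lies in $\{\nu_{12},0,2\sigma_{35}\}$, which is shown to vanish using $\nu_{12}\varepsilon_{15}=0$ and $\pi_{36}^{12}\cong(\mathbb Z_2)^4$. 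Then Oda's relation $E^5\alpha_3'''\equiv2E^3\alpha_3^{IV}$ modulo $E^3\pi_{33}^9\circ\sigma_{36}$ (a group that vanishes by \eqref{bm3si20si27}) gives $4yE^3\alpha_3^{IV}=0$; since $E^3\alpha_3^{IV}$ has order $16$, $y$ is divisible by $4$ and $2y\alpha_3'''=0$. Note that even combining your conclusion $2z=0$ with the paper's $z\in\{2\alpha_3'''\}$ only narrows $z$ to $\{0,4\alpha_3'''\}$, so some input of this kind --- the precise structure of $\pi_{38}^7$ and the suspension behaviour of $\alpha_3'''$ --- appears unavoidable.
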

\begin{proof}
  The indeterminacy of $\{\nu_7,\chi,2\sigma_{30}\}$ is trivial by
  Lemma \ref{l_ind}. We may take $\chi=x\beta'$ for some integer $x$
  from the proof of Lemma \ref{l_ezs}. By \cite[Theorem 3(c)]{Oda79}
  and \eqref{b'si30}, we have
  \begin{equation*}
    \{\nu_7,x\beta',2\sigma_{30}\}
    =\{\nu_7,x\beta',\sigma_{30}\}\circ2\iota_{38}
    \in2\pi_{38}^7=\{2\alpha_3'''\},
  \end{equation*}
  and hence $\{\nu_7,x\beta',2\sigma_{30}\}=2y\alpha_3'''$ for some
  integer $y$. By \eqref{b'} and $E^2\theta'=0$ (see
  \cite[p.~80]{Tod62}), we have
  \begin{equation*}
    2yE^5\alpha_3'''=E^5\{\nu_7,x\beta',2\sigma_{30}\}
    \in-\{\nu_{12},0,2\sigma_{35}\}
    =\nu_{12}\circ\pi_{43}^{15}+\pi_{36}^{12}\circ2\sigma_{36}.
  \end{equation*}
  The Toda bracket $-\{\nu_{12},0,2\sigma_{35}\}$ vanishes because
  $\pi_{36}^{12}\cong(\mathbb Z_2)^4$ (see
  \cite[Theorem 1.1(b)]{MMO75}),
  $\nu_{12}\circ\pi_{43}^{15}
  =\{\nu_{12}\varepsilon_{15}\bar\kappa_{23}\}$
  (see \cite[Theorem 2(b)]{Oda79}), and $\nu_{12}\varepsilon_{15}=0$
  by \eqref{nu6ep9}. On the other hand,
  $2yE^5\alpha_3'''\equiv4yE^3\alpha_3^{IV}
  \bmod E^3\pi_{33}^9\circ\sigma_{36}$
  (see \cite[III-Proposition 3.4]{Oda79}). Here, the group
  $E^3\pi_{33}^9\circ\sigma_{36}$ has three generators
  $\delta_{12}\sigma_{36}$, $\bar\mu_{12}\sigma_{29}^2$ and
  $\bar\sigma_{12}'\sigma_{36}$ (see \cite[pp.~34--35]{MMO75}), each
  of which is $0$ by \cite[III-Propositions 2.6(1), 2.6(5)]{Oda79}:
  \begin{equation}
    \label{bm3si20si27}
    \bar\mu_3\sigma_{20}^2=0,
    \quad
    \bar\sigma_7'\sigma_{31}=0
    \quad\text{and}\quad
    \delta_{11}\sigma_{35}=0.
  \end{equation}
  Therefore, we have $4yE^3\alpha_3^{IV}=0$. This yields
  \begin{equation*}
    \{\nu_7,x\beta',2\sigma_{30}\}=2y\alpha_3'''=0,
  \end{equation*}
  since $E^3\alpha_3^{IV}$ and $\alpha_3'''$ are of order $16$ and $8$
  (see \cite[Theorem 3(c)]{Oda79}), respectively.
\end{proof}

\begin{lem}
  \label{l_css}
  $\{\chi,\sigma_{23},2\sigma_{30}\}=\sigma'\eta_{14}\phi_{15}$ for
  $\chi\in\{\nu_7,\eta_{10},\zeta_{11}\}$.
\end{lem}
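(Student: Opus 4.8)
The plan is to evaluate the bracket on the distinguished representative $\sigma'\eta_{14}\varepsilon_{15}$ of $\{\nu_7,\eta_{10},\zeta_{11}\}$ supplied by Lemma~\ref{l_nez}, factoring $\sigma'\eta_{14}$ out of the Toda bracket so as to reduce to the bracket $\{\varepsilon_{15},\sigma_{23},2\sigma_{30}\}$ that defines $\phi_{15}$. To begin, observe that for every $\chi\in\{\nu_7,\eta_{10},\zeta_{11}\}$ the bracket $\{\chi,\sigma_{23},2\sigma_{30}\}$ is defined: by Lemma~\ref{l_nez}, $\chi$ equals $\sigma'\eta_{14}\varepsilon_{15}$ or $\sigma'\eta_{14}\varepsilon_{15}+\eta_7\bar\varepsilon_8$, and $\varepsilon_{15}\sigma_{23}=0$, $\bar\varepsilon_8\sigma_{23}=0$ by the suspensions of \eqref{ep3si11} and \eqref{be3si18}, so $\chi\sigma_{23}=0$, while $\sigma_{23}\circ2\sigma_{30}=2\sigma_{23}^2=0$ by \eqref{w15}. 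The same vanishings make $\{\varepsilon_{15},\sigma_{23},2\sigma_{30}\}$ defined, and $\phi_{15}$ lies in it by its defining property. The composition rule $\alpha\circ\{\beta,\gamma,\delta\}\subset\{\alpha\beta,\gamma,\delta\}$, with $\alpha=\sigma'\eta_{14}$, $\beta=\varepsilon_{15}$, $\gamma=\sigma_{23}$, $\delta=2\sigma_{30}$, then yields $\sigma'\eta_{14}\phi_{15}\in\sigma'\eta_{14}\circ\{\varepsilon_{15},\sigma_{23},2\sigma_{30}\}\subset\{\sigma'\eta_{14}\varepsilon_{15},\sigma_{23},2\sigma_{30}\}$.

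It then remains to show that this bracket contains nothing else and is unchanged when $\chi=\sigma'\eta_{14}\varepsilon_{15}$ is replaced by $\chi=\sigma'\eta_{14}\varepsilon_{15}+\eta_7\bar\varepsilon_8$. The indeterminacy of $\{\chi,\sigma_{23},2\sigma_{30}\}$ is $\chi\circ\pi_{38}^{23}+\pi_{31}^7\circ2\sigma_{31}$; its second summand is $0$ by Lemma~\ref{l_ind}, and for the first I would invoke the known structure of the stable group $\pi_{38}^{23}$ and check, generator by generator, that its elements are annihilated by $\sigma'\eta_{14}\varepsilon_{15}$ (and separately by $\eta_7\bar\varepsilon_8$), using the standard relations among the stable stems. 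For the independence of $\chi$, since the two representatives differ by $\eta_7\bar\varepsilon_8$ and $\bar\varepsilon_8\sigma_{23}=0$, additivity of the Toda bracket in its first entry reduces the discrepancy to $\{\eta_7\bar\varepsilon_8,\sigma_{23},2\sigma_{30}\}$, into which one feeds $\eta_7\circ\{\bar\varepsilon_8,\sigma_{23},2\sigma_{30}\}\subset\{\eta_7\bar\varepsilon_8,\sigma_{23},2\sigma_{30}\}$ together with the now-established triviality of the relevant indeterminacies.

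I expect the bookkeeping of the last paragraph to be the main obstacle: it hinges on the explicit description of $\pi_{38}^{23}$ (the stable $15$-stem) and of the auxiliary bracket $\{\bar\varepsilon_8,\sigma_{23},2\sigma_{30}\}$, and on verifying that every product of $\sigma'\eta_{14}\varepsilon_{15}$ or $\eta_7\bar\varepsilon_8$ with a $15$-stem class vanishes in $\pi_{38}^7$. Should any of these products fail to vanish, the clean equality asserted in the lemma would have to be weakened to an inclusion of cosets, which would complicate the subsequent application of the Jacobi identity \eqref{e_jac1}.
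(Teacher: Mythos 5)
Your overall strategy---take the representative from Lemma \ref{l_nez}, factor off $\sigma'\eta_{14}$, and kill the indeterminacy with Lemma \ref{l_ind}---is the same as the paper's, but the central step rests on an unverified assumption: that $\phi_{15}$ lies in $\{\varepsilon_{15},\sigma_{23},2\sigma_{30}\}$ ``by its defining property.'' The property actually on record (Oda, I-Proposition 3.4(5), as quoted in the paper) is $\phi_9\in\{\eta_9\sigma_{10},\sigma_{17},2\sigma_{24}\}$, i.e.\ the first entry is $\eta\sigma$, not $\varepsilon$. Since $\eta_9\sigma_{10}=\bar\nu_9+\varepsilon_9$ by \eqref{et7si8}, the two brackets differ by a contribution from $\{\bar\nu,\sigma,2\sigma\}$, so you cannot simply substitute one for the other. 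The paper bridges exactly this gap with the relation $\sigma'\eta_{14}\varepsilon_{15}=\sigma'\eta_{14}^2\sigma_{16}+\eta_7\bar\varepsilon_8$ from Oda's 1985 paper, which turns the representative $\chi$ into $\sigma'\eta_{14}^2\sigma_{16}+(x+1)\eta_7\bar\varepsilon_8$; the first summand then feeds into $\sigma'\eta_{14}\circ\{\eta_{15}\sigma_{16},\sigma_{23},2\sigma_{30}\}\ni\sigma'\eta_{14}\phi_{15}$, which is precisely the form of the quoted fact. That identity is the key idea of the proof and is absent from your argument.

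The second gap is the part you yourself flag as the ``main obstacle'': showing that $\{\eta_7\bar\varepsilon_8,\sigma_{23},2\sigma_{30}\}$ is exactly $\{0\}$ and that $\chi\circ\pi_{38}^{23}=0$. Your plan---compute $\{\bar\varepsilon_8,\sigma_{23},2\sigma_{30}\}\subset\pi_{38}^8$ and multiply by $\eta_7$---is considerably harder than necessary and is not carried out. The paper instead rewrites $\eta_7\bar\varepsilon_8=\nu_7\sigma_{10}\nu_{17}^2$ using \eqref{ep3ep11} and \eqref{w11}, so that both this bracket and all the relevant indeterminacies factor through $\nu_7\circ\pi_{38}^{10}$, which vanishes by Lemma \ref{l_ind}; the leftover indeterminacy $\sigma'\eta_{14}\varepsilon_{15}\circ\pi_{38}^{23}$ is then killed using $\pi_{38}^{23}=\{\rho_{23},\bar\varepsilon_{23}\}$, $\varepsilon_5\rho_{13}=0$, and $\eta_3\varepsilon_4\bar\varepsilon_{12}=\varepsilon_3\eta_{11}\bar\varepsilon_{12}=0$ via \eqref{et6ep7} and \eqref{ep9ep17}. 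Without these verifications you only get a coset inclusion rather than the asserted equality, which, as you correctly observe, would not suffice for the Jacobi identity \eqref{e_jac1}.
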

\begin{proof}
  By Lemma \ref{l_nez},
  $\chi=\sigma'\eta_{14}\varepsilon_{15}+x\eta_7\bar\varepsilon_8$
  for some integer $x$. By \cite[Theorem 2(2)]{Oda85}, we have
  $\sigma'\eta_{14}\varepsilon_{15}=
  \sigma'\eta_{14}^2\sigma_{16}+\eta_7\bar\varepsilon_8$.
  Then
  $\chi=\sigma'\eta_{14}^2\sigma_{16}+(x+1)\eta_7\bar\varepsilon_8$
  and
  \begin{equation*}
    \{\chi,\sigma_{23},2\sigma_{30}\}\subset
    \{\sigma'\eta_{14}^2\sigma_{16},\sigma_{23},2\sigma_{30}\}+
    \{(x+1)\eta_7\bar\varepsilon_8,\sigma_{23},2\sigma_{30}\}.
  \end{equation*}
  By \eqref{ep3ep11} and \eqref{w11},
  \begin{multline*}
    \{\eta_7\bar\varepsilon_8,\sigma_{23},2\sigma_{30}\}
    =\{\nu_7\sigma_{10}\nu_{17}^2,\sigma_{23},2\sigma_{30}\}
    \supset\nu_7
    \circ\{\sigma_{10}\nu_{17}^2,\sigma_{23},2\sigma_{30}\}\\
    \bmod\nu_7\sigma_{10}\nu_{17}^2\circ\pi_{38}^{23}
    +\pi_{31}^7\circ2\sigma_{31}.
  \end{multline*}
  By Lemma \ref{l_ind}, we have
  $\nu_7\circ\{\sigma_{10}\nu_{17}^2,\sigma_{23},2\sigma_{30}\}
  \subset\nu_7\circ\pi_{38}^{10}=0$
  and
  \begin{equation*}
    \nu_7\sigma_{10}\nu_{17}^2\circ\pi_{38}^{23}
    +\pi_{31}^7\circ2\sigma_{31}
    \subset\nu_7\circ\pi_{38}^{10}+\pi_{31}^7\circ2\sigma_{31}=0.
  \end{equation*}
  These give
  $\{(x+1)\eta_7\bar\varepsilon_8,\sigma_{23},2\sigma_{30}\}=0$. On
  the other hand, we have
  \begin{multline*}
    \{\sigma'\eta_{14}^2\sigma_{16},\sigma_{23},2\sigma_{30}\}
    \supset
    \sigma'\eta_{14}\circ
    \{\eta_{15}\sigma_{16},\sigma_{23},2\sigma_{30}\}
    \ni
    \sigma'\eta_{14}\phi_{15}\\
    \bmod
    \sigma'\eta_{14}^2\sigma_{16}\circ\pi_{38}^{23}+
    \pi_{31}^7\circ2\sigma_{31}
  \end{multline*}
  since $\{\eta_9\sigma_{10},\sigma_{17},2\sigma_{24}\}\ni\phi_9$ (see
  \cite[I-Proposition 3.4(5)]{Oda79}). Lemma \ref{l_ind},
  \eqref{ep3ep11} and the above equation
  $\sigma'\eta_{14}\varepsilon_{15}=
  \sigma'\eta_{14}^2\sigma_{16}+\eta_7\bar\varepsilon_8$
  yield
  \begin{equation*}
    \sigma'\eta_{14}^2\sigma_{16}\circ\pi_{38}^{23}
    +\pi_{31}^7\circ2\sigma_{31}
    =(\sigma'\eta_{14}\varepsilon_{15}-\nu_7\sigma_{10}\nu_{17}^2)
    \circ\pi_{38}^{23}
    =\sigma'\eta_{14}\varepsilon_{15}\circ\pi_{38}^{23}.
  \end{equation*}
  Then the indeterminacy is trivial because
  $\pi_{38}^{23}=\{\rho_{23},\bar\varepsilon_{23}\}$ (see
  \cite[Theorem 10.10]{Tod62}), $\varepsilon_5\rho_{13}=0$ (see
  \cite[I-Proposition 3.1(4)]{Oda79}), and
  $\eta_3\varepsilon_4\bar\varepsilon_{12}=
  \varepsilon_3\eta_{11}\bar\varepsilon_{12}=0$
  by \eqref{et6ep7} and \eqref{ep9ep17}. Therefore, we have
  $\{\sigma'\eta_{14}^2\sigma_{16},\sigma_{23},2\sigma_{30}\}=
  \sigma'\eta_{14}\phi_{15}$
  and this completes the proof.
\end{proof}

Lemmas \ref{l_ncs} and \ref{l_css} simplify \eqref{e_jac1} as follows.
\begin{equation}
  \label{e_jac2}
  \sigma'\eta_{14}\phi_{15}
  \in\{\nu_7,\eta_{10},\{\zeta_{11},\sigma_{22},2\sigma_{29}\}\}.
\end{equation}
Next, we investigate the Toda bracket
$\{\nu_7,\eta_{10},\{\zeta_{11},\sigma_{22},2\sigma_{29}\}\}$.
The following lemma extends \cite[Lemma 3.5]{HM00}.

\begin{lem}
  \label{l_zss}
  $\{\zeta_{11},\sigma_{22},2\sigma_{29}\}
  =\nu_{11}^2\bar\kappa_{17}
  \pm2\tau'''+x\sigma_{11}\bar\sigma_{18}$
  for $x=0$ or $1$.
\end{lem}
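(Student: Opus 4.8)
The plan is to compute the Toda bracket $\{\zeta_{11},\sigma_{22},2\sigma_{29}\}$ by the standard EHP-and-Hopf strategy: first localize the bracket inside a known group containing it, then pin down the coefficient of each generator by applying the Hopf homomorphism $H$ and by desuspending back to lower stems. The bracket lives in $\pi_{40}^{11}$; its indeterminacy is $\zeta_{11}\circ\pi_{29}^{22}+\pi_{39}^{11}\circ2\sigma_{39}$, so I would first identify these two pieces. Since $\pi_{29}^{22}$ is known (Toda's tables) and $\zeta_{11}$ composed with its generators should mostly vanish or land in small subgroups by relations such as \eqref{ze10si21}, and since $\pi_{39}^{11}\circ2\sigma_{39}$ is controlled by $2$-divisibility in $\pi_{39}^{11}$, the indeterminacy ought to reduce to at most $\{2\tau''',\sigma_{11}\bar\sigma_{18}\}$ or a subgroup thereof — which already explains the shape of the claimed answer, with the ``$\pm$'' and the ``$x=0$ or $1$'' absorbing exactly that indeterminacy.

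The heart of the argument is to show that $\nu_{11}^2\bar\kappa_{17}$ genuinely lies in the bracket. I would do this by relating $\{\zeta_{11},\sigma_{22},2\sigma_{29}\}$ to the bracket $\{\zeta_{11},\sigma_{22},2\sigma_{29}\}$ considered in \cite[Lemma 3.5]{HM00} (the lemma's statement says it \emph{extends} that result, so presumably \cite{HM00} computes a suspension or a mod-something version), and then use the EHP sequence to transfer the information up one stem. Concretely: apply $H\colon\pi_{40}^{11}\to\pi_{40}^{21}$ to the bracket; by Toda's $H\{\alpha,\beta,\gamma\}=-\bar\alpha\circ(\text{something})$ formula the image should be computable from $H\zeta_{11}$ together with $\sigma_{22}\circ 2\sigma_{29}$. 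Knowing $H\nu_{11}^2\bar\kappa_{17}$ and $H\sigma_{11}\bar\sigma_{18}$ and $H\tau'''$ in $\pi_{40}^{21}$ then forces the $\nu_{11}^2\bar\kappa_{17}$-coefficient to be exactly $1$ (no room for other generators whose Hopf invariant matches). The factor $2\sigma_{29}$ rather than $\sigma_{29}$ is what makes this bracket well-defined and nonzero: $\{\zeta_{11},\sigma_{22},\sigma_{29}\}$ would have a larger, less useful indeterminacy, while the factor of $2$ kills $\zeta_{11}\sigma_{22}$-type obstructions via \eqref{ze10si21} and $2\sigma_{29}^2$-type obstructions via \eqref{w15}.

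The step I expect to be the main obstacle is nailing down the coefficient $x$ of $\sigma_{11}\bar\sigma_{18}$ and the sign on $2\tau'''$ — i.e.\ proving that those two terms really are pure indeterminacy and not forced to a particular value. This requires knowing precisely which elements of $\pi_{39}^{11}$ are divisible by $2$ (to compute $\pi_{39}^{11}\circ 2\sigma_{39}$), and here I would lean on Oda's and Mimura's structure results for the relevant $11$-stem groups together with the relation $2\sigma_{15}^2=[\iota_{15},\iota_{15}]$ from \eqref{w15} suspended appropriately. A secondary difficulty is verifying that $\zeta_{11}\circ\pi_{29}^{22}$ contributes nothing new beyond what is already absorbed: this amounts to checking a handful of composites $\zeta_{11}\circ(\text{generator of }\pi_{29}^{22})$, each of which should die by $\eta$- or $\nu$-relations in Toda's list (\eqref{et5ze6}, \eqref{nu6ep9}) or by order considerations. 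Once these bookkeeping points are settled, the lemma follows by assembling the $H$-computation with the indeterminacy calculation.
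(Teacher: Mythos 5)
There is a genuine gap, and it sits at what you yourself call the heart of the argument. You propose to detect the coefficient of $\nu_{11}^2\bar\kappa_{17}$ by applying the Hopf homomorphism to the bracket on $S^{11}$. But $\nu_{11}^2\bar\kappa_{17}$ is a composite of suspension elements, so $H(\nu_{11}^2\bar\kappa_{17})=0$; the same is true of $\sigma_{11}\bar\sigma_{18}$ and of every other candidate generator except the $\tau$-family. Indeed, in the paper's own computation one dimension lower, $\nu_{10}^2\bar\kappa_{16}$, $\sigma_{10}\bar\sigma_{17}$, $\bar\kappa_{10}\nu_{30}^2$ and $\eta_{10}\mu_{3,11}$ are precisely the generators of $\ker H$. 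So an $H$-computation can only pin down the $\tau$-part and gives literally no information about the coefficient you most need. The detection has to go in the opposite direction: the paper first replaces $\{\zeta_{11},\sigma_{22},2\sigma_{29}\}$ by $\{\zeta_{11},2\sigma_{22},\sigma_{29}\}$ (showing along the way that $\zeta_{11}\circ\pi_{37}^{22}=0$ and $\pi_{30}^{11}\circ\sigma_{30}=0$, so the bracket is a single element), uses Oda's $H\{\zeta_{10},2\sigma_{21},\sigma_{28}\}_1=H\tau''$ on $S^{10}$ to write the bracket as $\pm2\tau'''$ plus an unknown combination of $\ker H$ generators, and then \emph{stabilizes}: since $\sigma\bar\sigma$ and $E^8(2\tau''')$ die stably, the stable bracket $\langle\zeta,2\sigma,\sigma\rangle=\nu^2\bar\kappa$ of Hirato--Mukai forces the coefficient of $\nu_{11}^2\bar\kappa_{17}$ to be $1$ and that of $\eta_{11}\mu_{3,12}$ to be $0$. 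Your plan omits both the $2\sigma_{22}\leftrightarrow\sigma_{22}\circ 2$ shuffle needed to connect with these references and the stabilization step that actually does the detecting.

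Two further points. Your dimensional bookkeeping is off: the bracket lies in $\pi_{37}^{11}$ (the $26$-stem), not $\pi_{40}^{11}$, and the indeterminacy is $\zeta_{11}\circ\pi_{37}^{22}+\pi_{30}^{11}\circ2\sigma_{30}$, not the expression you wrote (which does not even land in the right group). Also, the terms $\pm2\tau'''$ and $x\sigma_{11}\bar\sigma_{18}$ are \emph{not} indeterminacy absorbed by the statement: the paper proves the bracket consists of a single element; the sign and the value of $x$ are simply left undetermined because they are invisible both to $H$ and to stabilization. Treating them as indeterminacy would misstate what is being proved.
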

\begin{proof}
  We consider an inclusion
  \begin{equation*}
    \{\zeta_{11},2\sigma_{22},\sigma_{29}\}
    \supset\{\zeta_{11},\sigma_{22},2\sigma_{29}\}
    \bmod\zeta_{11}\circ\pi_{37}^{22}+\pi_{30}^{11}\circ\sigma_{30}.
  \end{equation*}
  The groups $\pi_{37}^{22}=\{\rho_{22},\bar\varepsilon_{22}\}$ and
  $\pi_{30}^{11}=\{\lambda'\eta_{29},\xi'\eta_{29},
  \bar\sigma_{11},\bar\zeta_{11}\}$
  are obtained in \cite[Theorems 10.10, 12.23]{Tod62}. Since
  $\zeta_{11}\rho_{22}\equiv0\bmod8\pi_{37}^{11}$ (see
  \cite[I-Proposition 3.5(7)]{Oda79}) and $\pi_{37}^{11}\cong\mathbb
  Z_8\oplus(\mathbb Z_2)^4$ (see \cite[Theorem 1(b)]{Oda79}), we have
  $\zeta_{11}\rho_{22}=0$. We also have
  $\zeta_{11}\bar\varepsilon_{22}=\zeta_{11}\eta_{22}\kappa_{23}=0$ by
  \eqref{et6ka7} and \eqref{et5ze6}. Thus
  $\zeta_{11}\circ\pi_{37}^{22}=0$. Moreover, we obtain
  $\pi_{30}^{11}\circ\sigma_{30}=0$:
  $\lambda'\eta_{29}\sigma_{30}=\xi'\eta_{29}\sigma_{30}=0$ by
  \eqref{pret29si30}; $\bar\sigma_{10}\sigma_{29}=0$ by
  \cite[I-Proposition 6.4(6)]{Oda79}; and
  $\bar\zeta_{11}\sigma_{30}=0$ by \cite[I-(8.14)]{Oda79}. This
  implies that $\{\zeta_{11},\sigma_{22},2\sigma_{29}\}$ equals
  $\{\zeta_{11},2\sigma_{22},\sigma_{29}\}$ and consists of one
  element. Therefore, we examine the Toda bracket
  $\{\zeta_{11},2\sigma_{22},\sigma_{29}\}$ below.
  By \cite[the proof of I-(8.22)]{Oda79}, we have
  $H\{\zeta_{10},2\sigma_{21},\sigma_{28}\}_1=H\tau''$ and hence
  \begin{equation*}
    \{\zeta_{10},2\sigma_{21},\sigma_{28}\}_1
    \subset\tau''+\ker H=\tau''
    +\{2\tau'',\sigma_{10}\bar\sigma_{17},\bar\kappa_{10}\nu_{30}^2,
    \nu_{10}^2\bar\kappa_{16},\eta_{10}\mu_{3,11}\}
  \end{equation*}
  by \cite[I-(8.12), (8.13)]{Oda79}.
  Then a relation
  $\bar\kappa_{11}\nu_{31}^2=\nu_{11}^2\bar\kappa_{17}$ (see
  \cite[I-Proposition 6.4(7)]{Oda79}) shows
  \begin{align*}
    \{\zeta_{11},2\sigma_{22},\sigma_{29}\}
    & =-E\{\zeta_{10},2\sigma_{21},\sigma_{28}\}_1\\
    & \in-E\tau''
      +\{2E\tau'',\sigma_{11}\bar\sigma_{18},
      \nu_{11}^2\bar\kappa_{17},\eta_{11}\mu_{3,12}\}.
  \end{align*}
  By \cite[I-(8.18), Theorem 1(b)]{Oda79}, $2\tau'''=-E\tau''$,
  $\tau'''$ is of order $8$, and $\sigma_{11}\bar\sigma_{18}$,
  $\nu_{11}^2\bar\kappa_{17}$, $\eta_{11}\mu_{3,12}$ are of order $2$.
  Then $\{\zeta_{11},2\sigma_{22},\sigma_{29}\}$ is written as
  \begin{equation*}
    \{\zeta_{11},2\sigma_{22},\sigma_{29}\}=
    \pm2\tau'''+a\sigma_{11}\bar\sigma_{18}+
    b\nu_{11}^2\bar\kappa_{17}+c\eta_{11}\mu_{3,12}
  \end{equation*}
  with coefficients $a,b,c$, each of which is $0$ or $1$. Since
  $\sigma_{14}\bar\sigma_{21}=0$ and
  $E^8(2\tau''')=-E^9\tau''=4E^7\tau^{IV}=0$ (see
  \cite[I-Proposition 6.4(10), (8.22), (8.24)]{Oda79}), we have
  \begin{equation*}
    \langle\zeta,2\sigma,\sigma\rangle=
    b\nu^2\bar\kappa+c\eta\mu_{3,\ast}\in\pi_{26}^s.
  \end{equation*}
  Here,
  $\pi_{26}^s=\mathbb Z_2\{\nu^2\bar\kappa\}\oplus
  \mathbb Z_2\{\eta\mu_{3,*}\}$
  by \cite[Theorem 1(b)]{Oda79} and
  $\langle\zeta,2\sigma,\sigma\rangle=
  \langle\sigma,2\sigma,\zeta\rangle=\nu^2\bar\kappa$
  by \cite[Theorem 1]{HM00}. Thus $b=1$ and $c=0$: the proof is
  completed.
\end{proof}

We recall the definition and a property of
$\bar{\bar\sigma}_6'\in\pi_{30}^6$ from \cite[(3.8)]{MMO75}.
\begin{equation}
  \label{bbs6}
  \bar{\bar\sigma}_6'\in\{\nu_6,\eta_9,\bar\sigma_{10}\}_3,
  \quad2\bar{\bar\sigma}_6'=0.
\end{equation}

\begin{lem}
  \label{l_nec}
  $\{\nu_7,\eta_{10},\chi\}
  \equiv\bar\nu_7\nu_{15}\bar\kappa_{18}
  \bmod\sigma'\bar{\bar\sigma}_{14}'$
  for $\chi\in\{\zeta_{11},\sigma_{22},2\sigma_{29}\}$.
\end{lem}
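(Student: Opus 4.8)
The plan is to chase the element $\{\nu_7,\eta_{10},\chi\}$ with $\chi\in\{\zeta_{11},\sigma_{22},2\sigma_{29}\}$ through the elementary properties of Toda brackets, using Lemma \ref{l_zss} to replace $\chi$ by the explicit sum $\nu_{11}^2\bar\kappa_{17}\pm2\tau'''+x\sigma_{11}\bar\sigma_{18}$. First I would substitute this expression and split the bracket additively, so that
\begin{equation*}
  \{\nu_7,\eta_{10},\chi\}\subset
  \{\nu_7,\eta_{10},\nu_{11}^2\bar\kappa_{17}\}
  +\{\nu_7,\eta_{10},\pm2\tau'''\}
  +\{\nu_7,\eta_{10},x\sigma_{11}\bar\sigma_{18}\}
\end{equation*}
modulo the relevant indeterminacies. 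The term $\{\nu_7,\eta_{10},\pm2\tau'''\}$ should be handled by pulling the factor $2$ out of the last slot, rewriting it as $\pm\{\nu_7,\eta_{10},\tau'''\}\circ2\iota$ (or $\{\nu_7,2\eta_{10},\tau'''\}$ with $2\eta_{10}=0$ after one more shuffle), and observing via \eqref{nu6nu9}-type identities together with $2\eta=0$ that this contributes only $0$; here the earlier computations of $\pi$-groups in dimension $38$ on $S^7$ (Lemma \ref{l_ind}) keep the indeterminacy under control. For $\{\nu_7,\eta_{10},x\sigma_{11}\bar\sigma_{18}\}$ I expect the relation $\eta_5\bar\sigma_6=0$ from \eqref{et5bs6} (suspended to $\eta_{10}\bar\sigma_{11}=0$, or rather the needed variant $\sigma_{11}\bar\sigma_{18}$ relations from \cite[Oda79]{Oda79}) and $\nu_7\eta_{10}=0$ to force this bracket into a subgroup that vanishes after one suspension, hence is killed since the relevant suspension is monomorphic in this range (e.g. \eqref{mono}).

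The main contribution, and the heart of the lemma, is the term $\{\nu_7,\eta_{10},\nu_{11}^2\bar\kappa_{17}\}$. Here I would use $\{\eta_{10},\nu_{11},\eta_{14}\}\ni\nu_{10}^2$ — i.e. \eqref{nu6nu9} suspended — to rewrite $\nu_{11}^2$ as a Toda bracket and then apply the shuffle $\{\nu_7,\eta_{10},\{\eta_{11},\nu_{12},\eta_{15}\}\bar\kappa_{17}\}$ through a Jacobi/associativity identity, turning it into $-\{\{\nu_7,\eta_{10},\eta_{11}\},\nu_{12},\eta_{15}\}\bar\kappa_{17}$ or similar. Recognizing $\{\nu_7,\eta_{10},\eta_{11}\}$ in terms of known $2$-stem elements (e.g. relating to $\bar\nu_7$ via a standard Toda bracket expression $\bar\nu_6\in\{\nu_6,\eta_9,\nu_{10}\}$-type identity, already invoked on p.~138 of \cite{Tod62}) should produce $\bar\nu_7\nu_{15}\bar\kappa_{18}$ as the claimed representative, with the $\sigma'\bar{\bar\sigma}_{14}'$ ambiguity coming precisely from the indeterminacy $\nu_7\circ\pi_{\ast}^{10}+\pi_{\ast}^7\circ(\nu_{15}^2\bar\kappa)$ together with the definition \eqref{bbs6} of $\bar{\bar\sigma}_6'$ as a member of $\{\nu_6,\eta_9,\bar\sigma_{10}\}_3$.

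Finally I would pin down the indeterminacy: it equals $\nu_7\circ\pi_{39}^{10}+\pi_{32}^7\circ(\nu_{22}^2\bar\kappa_{25})$ (in the appropriate degrees), the first summand being constrained by a Lemma-\ref{l_ind}-style computation of $\nu_7\circ\pi_{39}^{10}$ and the second by $\pi_{32}^7\cong(\mathbb Z_2)^{?}$ from \cite{MMO75}; the only surviving nonzero class should be $\sigma'\bar{\bar\sigma}_{14}'$, which is exactly what appears in the statement. The hard part will be step two — correctly identifying $\{\nu_7,\eta_{10},\nu_{11}^2\bar\kappa_{17}\}$ as $\bar\nu_7\nu_{15}\bar\kappa_{18}$ modulo $\sigma'\bar{\bar\sigma}_{14}'$ — because it requires a careful Jacobi-identity manipulation (of the type used in \eqref{e_jac1}) linking the $\nu_{11}^2$ factor, the known value of $\{\eta,\nu,\eta\}$, and the definition of $\bar\nu$ and of $\bar{\bar\sigma}_6'$ as iterated brackets, and because the bookkeeping of the indeterminacies at each shuffle must be tracked precisely using the homotopy groups already computed in Section 2 and in \cite{MMO75, Oda79}.
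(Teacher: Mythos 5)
Your decomposition of the problem matches the paper's: substitute Lemma \ref{l_zss}, split into the brackets with third entries $\nu_{11}^2\bar\kappa_{17}$, $2\tau'''$ and $x\sigma_{11}\bar\sigma_{18}$, obtain $\bar\nu_7\nu_{15}\bar\kappa_{18}$ from the first via $\bar\nu_7\in\{\nu_7,\eta_{10},\nu_{11}\}$ (no Jacobi identity is needed there: $\{\nu_7,\eta_{10},\nu_{11}\}\circ\nu_{15}\bar\kappa_{18}\subset\{\nu_7,\eta_{10},\nu_{11}^2\bar\kappa_{17}\}$ already does it, and the indeterminacy $\nu_7\circ\pi_{38}^{10}+\pi_{12}^7\circ\nu_{12}^2\bar\kappa_{18}$ vanishes by Lemma \ref{l_ind} and $\pi_{12}^7=0$), and kill the second by extracting the factor $2$ --- though it must be extracted as $2\tau'''=2\iota_{11}\circ\tau'''$ so that one lands in $\{\nu_7,\eta_{10},2\iota_{11}\}\circ E\tau'''\subset\pi_{12}^7\circ E\tau'''=0$; your $\{\nu_7,\eta_{10},\tau'''\}\circ2\iota$ presupposes $\eta_{10}\tau'''=0$, which is not known.

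The genuine gap is your treatment of the third summand and, consequently, your account of where the $\sigma'\bar{\bar\sigma}_{14}'$ ambiguity comes from. You predict $\{\nu_7,\eta_{10},x\sigma_{11}\bar\sigma_{18}\}=0$ and place $\sigma'\bar{\bar\sigma}_{14}'$ in the indeterminacy of the first bracket; in fact all three Toda brackets here have \emph{zero} indeterminacy (since $\nu_7\circ\pi_{38}^{10}=0$ and $\pi_{12}^7=\pi_{19}^7=0$), and $\sigma'\bar{\bar\sigma}_{14}'$ is the honest \emph{value} of the third bracket. One computes $\{\nu_7,\eta_{10},\sigma_{11}\bar\sigma_{18}\}\subset\{\nu_7,\eta_{10}\sigma_{11},\bar\sigma_{18}\}$, replaces $\eta_{10}\sigma_{11}$ by $\sigma_{10}\eta_{17}$ using \eqref{w9}, replaces $\nu_7\sigma_{10}$ by $y\sigma'\nu_{14}$ with $y$ odd using \eqref{nu7si10}, and then recognizes $\sigma'\circ\{\nu_{14},\eta_{17},\bar\sigma_{18}\}\ni\sigma'\bar{\bar\sigma}_{14}'$ from the definition \eqref{bbs6}, the inner bracket being defined thanks to \eqref{et5bs6}. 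The modulus in the lemma is therefore the unknown coefficient $x\in\{0,1\}$ carried over from Lemma \ref{l_zss}, not a bracket indeterminacy. Your proposed mechanism for killing this term cannot work: \eqref{mono} concerns $E\colon\pi_{37}^{11}\to\pi_{38}^{12}$, whereas the bracket lives in $\pi_{38}^7$, and $\sigma'\bar{\bar\sigma}_{14}'$ is precisely the kind of element that dies under double suspension --- $E^2(\sigma'\bar{\bar\sigma}_{14}')=2\sigma_9\bar\sigma_{16}'=0$ by \eqref{w8} and \eqref{Oda_p4009} --- without being known to vanish in $\pi_{38}^7$. Had it been provably zero, the lemma would assert an equality rather than a congruence.
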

\begin{proof}
  By Lemma \ref{l_zss}, $\chi$ has the form
  $\nu_{11}^2\bar\kappa_{17}\pm2\tau'''+x\sigma_{11}\bar\sigma_{18}$
  for $x=0$ or $1$. Then there is an inclusion
  \begin{equation*}
    \{\nu_7,\eta_{10},\chi\}
    \subset\{\nu_7,\eta_{10},\nu_{11}^2\bar\kappa_{17}\}
    +\{\nu_7,\eta_{10},2\tau'''\}
    +\{\nu_7,\eta_{10},x\sigma_{11}\bar\sigma_{18}\}.
  \end{equation*}
  Notice that $\nu_7\circ\pi_{38}^{10}=0$, $\pi_{12}^7=0$ and
  $\pi_{19}^7=0$ by Lemma \ref{l_ind} and \cite[Proposition 5.9,
  Theorem 7.6]{Tod62}. Since
  \begin{equation}
    \label{bnu6}
    \bar\nu_6\in\{\nu_6,\eta_9,\nu_{10}\},
    \quad\text{\cite[p.~53]{Tod62},}
  \end{equation}
  we have
  \begin{multline*}
    \bar\nu_7\nu_{15}\bar\kappa_{18}
    \in\{\nu_7,\eta_{10},\nu_{11}\}\circ\nu_{15}\bar\kappa_{18}
    \subset\{\nu_7,\eta_{10},\nu_{11}^2\bar\kappa_{17}\}\\
    \bmod\nu_7\circ\pi_{38}^{10}
    +\pi_{12}^7\circ\nu_{12}^2\bar\kappa_{18}=0.
  \end{multline*}
  By \eqref{mono}, a relation
  $2\iota_{11}\circ\tau'''=2\tau'''$ holds, and hence
  \begin{multline*}
    \{\nu_7,\eta_{10},2\tau'''\}
    \supset\{\nu_7,\eta_{10},2\iota_{11}\}\circ E\tau'''
    \subset\pi_{12}^7\circ E\tau'''=0\\
    \bmod\nu_7\circ\pi_{38}^{10}+\pi_{12}^7\circ2E\tau'''=0.
  \end{multline*}
  By \eqref{w9}, \eqref{nu7si10}, \eqref{et5bs6} and \eqref{bbs6},
  \begin{align*}
    \{\nu_7,\eta_{10},\sigma_{11}\bar\sigma_{18}\}
    & \subset\{\nu_7,\eta_{10}\sigma_{11},\bar\sigma_{18}\}\\
    & \supset\{\nu_7\sigma_{10},\eta_{17},\bar\sigma_{18}\}\\
    & \supset y\sigma'\circ\{\nu_{14},\eta_{17},\bar\sigma_{18}\}\\
    & \ni\sigma'\bar{\bar\sigma}_{14}'
      \bmod\nu_7\circ\pi_{38}^{10}+\pi_{19}^7\circ\bar\sigma_{19}=0
  \end{align*}
  for some odd integer $y$. These imply
  $\{\nu_7,\eta_{10},\nu_{11}^2\bar\kappa_{17}\}
  =\bar\nu_7\nu_{15}\bar\kappa_{18}$,
  $\{\nu_7,\eta_{10},2\tau'''\}=0$ and
  $\{\nu_7,\eta_{10},\sigma_{11}\bar\sigma_{18}\}
  =\sigma'\bar{\bar\sigma}_{14}'$.
\end{proof}

From Lemma \ref{l_nec} and \eqref{e_jac2}, we have
\begin{equation*}
  \sigma'\eta_{14}\phi_{15}
  \equiv\bar\nu_7\nu_{15}\bar\kappa_{18}
  \bmod\sigma'\bar{\bar\sigma}_{14}'.
\end{equation*}
Since
$\eta_4\phi_5\equiv\delta_4
\bmod\bar\mu_4\sigma_{21},(E\varepsilon')\kappa_{14}$
(see \cite[I-Proposition 3.5(9)]{Oda79}),
\begin{equation*}
  \sigma'\delta_{14}
  \equiv\bar\nu_7\nu_{15}\bar\kappa_{18}
  \bmod\sigma'\bar{\bar\sigma}_{14}',
  \sigma'\bar\mu_{14}\sigma_{31},
  \sigma'(E^{11}\varepsilon')\kappa_{24}.
\end{equation*}
Here, $E^{11}\varepsilon'=\pm2\nu_{14}\sigma_{17}=0$ by
\cite[(7.10)]{Tod62} and \eqref{w11}. Furthermore, we obtain
$\bar\sigma_6'\equiv\bar{\bar\sigma}_6'
\bmod P(\xi_{13})\eta_{29},\nu_6\sigma_9\kappa_{16}$
by \cite[Lemma 4.5]{MMpre}.
Since $\nu_8\sigma_{11}\kappa_{18}=P(\nu_{17}\kappa_{20})$ (see
\cite[(5.11)]{MMO75}), we see that
$\bar\sigma_9'=\bar{\bar\sigma}_9'$. This completes the proof of
Theorem \ref{main1}.

\section{Proof of Theorem \ref{IM}\ref{IM_p4110}.}

According to \cite[the proof of III-Proposition 3.3(2)]{Oda79},
\begin{equation}
  \label{e_tar}
  \{\kappa_{10}+8x\sigma_{10}^2,2\iota_{24},\eta_{24}\}\ni0,
  \quad\{2\iota_{23},\eta_{23},\sigma_{24}^2\}=0
\end{equation}
and $\kappa_{10}^\ast$ is a representative of a tertiary composition
\begin{equation*}
  \{\kappa_{10}+8x\sigma_{10}^2,
  2\iota_{24},\eta_{24},\sigma_{25}^2\}_1,
\end{equation*}
where $x$ is an integer satisfying that
$\{\kappa_{11},2\iota_{25},\eta_{25}\}=x\sigma_{11}\mu_{18}$.
It holds that \cite[III-(10.5)]{Oda79}:
\begin{equation*}
  2\kappa_{10}^\ast
  \in\{\bar\varepsilon_{10},\eta_{25},\sigma_{26}^2\}
  \bmod P(\nu_{21}\bar\sigma_{24})
  \quad\text{and}\quad
  2\kappa_{10}^\ast
  =x\delta_{10}\sigma_{34}
  \text{ for }x=0\text{ or }1,
\end{equation*}
where
$\delta_{10}\sigma_{34}=P(\nu_{21}\bar\sigma_{24})=
[\iota_{10},\nu_{10}\bar\sigma_{13}]$
by \eqref{imageP} and \eqref{imageW}.

We denote by $M^n=S^{n-1}\cup_{2\iota_{n-1}}e^n$ the
$\mathbb Z_2$-Moore space, $i_n\colon S^{n-1}\rightarrow M^n$ the
inclusion map, and $p_n\colon M^n\rightarrow S^n$ the collapsing map.
Let $\tilde\eta_n\in\pi_{n+2}(M^{n+1})$ be a coextension of $\eta_n$
for $n\geq3$. Notice that
$\tilde\eta_n\in\pi_{n+2}(M^{n+1})\cong\mathbb Z_4$ is a generator
satisfying
\begin{equation}
  \label{coet}
  p_{n+1}\tilde\eta_n=\eta_{n+1},
  \quad2\tilde\eta_n=i_{n+1}\eta_n^2
  \quad\text{and}
  \quad\tilde\eta_n\in\{i_{n+1},2\iota_n,\eta_n\}.
\end{equation}

By \eqref{w2} and \eqref{et9si10si17}, a Toda bracket
$\{2\iota_{11},\eta_{11},\sigma_{12}^2\}\subset\pi_{27}^{11}$ is well
defined. Since $\pi_{27}^{11}=\mathbb Z_2\{\sigma_{11}\mu_{18}\}$,
$\pi_{39}^{23}=
\mathbb Z_2\{\omega_{23}\}\oplus\mathbb Z_2\{\sigma_{23}\mu_{30}\}$
(see \cite[Theorem 12.16]{Tod62}) and
$E^{12}\colon\pi_{27}^{11}\rightarrow\pi_{39}^{23}$ is a monomorphism,
\eqref{e_tar} implies $\{2\iota_{11},\eta_{11},\sigma_{12}^2\}=0$.
Then \eqref{coet} gives
\begin{equation*}
  \tilde\eta_{11}\sigma_{13}^2
  \in\{i_{12},2\iota_{11},\eta_{11}\}\circ\sigma_{13}^2
  =-(i_{12}\circ\{2\iota_{11},\eta_{11},\sigma_{12}^2\})
  =0.
\end{equation*}
By \eqref{e_tar}, there exists an extension
$\overline{\kappa_{10}+8x\sigma_{10}^2}\in[M^{25},S^{10}]$ of
$\kappa_{10}+8x\sigma_{10}^2$ such that
$\overline{\kappa_{10}+8x\sigma_{10}^2}\circ\tilde\eta_{24}=0$. Thus,
we may define a Toda bracket
\begin{equation*}
  \{\overline{\kappa_{10}+8x\sigma_{10}^2},
  \tilde\eta_{24},\sigma_{26}^2\}_{13}.
\end{equation*}
Hereafter, $\kappa_{10}^\ast$ is chosen as a representative of this
Toda bracket.

\begin{lem}
  \label{l_tb1}
  $\kappa_{10}^\ast
  =\{\overline{\kappa_{10}+8x\sigma_{10}^2},\tilde\eta_{24},
  \sigma_{26}^2\}_{13}$
  and
  $2\kappa_{10}^\ast
  =E\{\bar\varepsilon_9,\eta_{24},\sigma_{25}^2\}_{12}$.
\end{lem}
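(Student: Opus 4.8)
The plan is to prove Lemma~\ref{l_tb1} in two parts. For the first identity, I would relate the ``geometric'' Toda bracket $\{\overline{\kappa_{10}+8x\sigma_{10}^2},\tilde\eta_{24},\sigma_{26}^2\}_{13}$, defined through the Moore space $M^{25}$, to the ordinary tertiary composition $\{\kappa_{10}+8x\sigma_{10}^2,2\iota_{24},\eta_{24},\sigma_{25}^2\}_1$ that Oda uses to define $\kappa_{10}^\ast$. The bridge is \eqref{coet}: the coextension $\tilde\eta_{24}$ sits in $\{i_{25},2\iota_{24},\eta_{24}\}$, so composing the extension $\overline{\kappa_{10}+8x\sigma_{10}^2}$ (which kills $\kappa_{10}+8x\sigma_{10}^2$ on the bottom cell and is built from the first relation in \eqref{e_tar}) with $\tilde\eta_{24}$ unwinds, via the standard juggling of a $4$-fold bracket through a Moore space, to the Toda bracket representing $\kappa_{10}^\ast$. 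One must check that the indeterminacy of the geometric bracket is contained in that of Oda's tertiary composition, i.e. that $\pi_{41}^{10}\supset$ the relevant indeterminacy groups collapse appropriately; here one invokes \eqref{Oda_p4110} together with the computation of $[M^{25},S^{10}]$-type indeterminacy, so that the representative is determined up to the same ambiguity as $\kappa_{10}^\ast$ itself.

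For the second identity, $2\kappa_{10}^\ast=E\{\bar\varepsilon_9,\eta_{24},\sigma_{25}^2\}_{12}$, I would start from $2\tilde\eta_{24}=i_{25}\eta_{24}^2$ in \eqref{coet}, so that
\begin{equation*}
  2\kappa_{10}^\ast
  =\{\overline{\kappa_{10}+8x\sigma_{10}^2},2\tilde\eta_{24},\sigma_{26}^2\}_{13}
  \supset\{\overline{\kappa_{10}+8x\sigma_{10}^2}\circ i_{25},\eta_{24}^2,\sigma_{26}^2\}.
\end{equation*}
Now $\overline{\kappa_{10}+8x\sigma_{10}^2}\circ i_{25}=\kappa_{10}+8x\sigma_{10}^2$, and $\eta_{24}^2\sigma_{26}^2$ telescopes (using $\eta_n\sigma_{n+1}=\bar\nu_n+\varepsilon_n$ from \eqref{et7si8} and the vanishing $\eta\sigma^2=0$ of \eqref{et9si10si17}) so that the inner data matches $\bar\varepsilon$: one rewrites $(\kappa_{24}$-type) $\cdot\eta^2$ via $\eta_9\sigma_{10}=\bar\nu_9+\varepsilon_9$ and the relation $\varepsilon_9^2=\eta_9\bar\varepsilon_{10}=0$ from \eqref{ep9ep17}, landing on $\{\bar\varepsilon_9,\eta_{24},\sigma_{25}^2\}$ after a suspension shift. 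The suspension appears because going from the $M^{25}$-level bracket down through $i_{25}$ drops one dimension in the filtration, and the surviving ordinary $3$-fold bracket is $E$ of the one in degree one lower. I would also use $\tilde\eta_{11}\sigma_{13}^2=0$, established just above the lemma, to guarantee that the relevant secondary brackets are defined and that no extra terms from the $M^{25}$-cell structure contribute.

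The main obstacle I expect is bookkeeping the indeterminacies and the exact identification of which suspension of which bracket one lands in: tertiary compositions and Moore-space Toda brackets both carry indeterminacy subgroups that must be shown to coincide (or to be absorbed into the ambiguity already present in the definition of $\kappa_{10}^\ast$). Concretely, I would need the EHP-type monomorphism statements (e.g. \eqref{mono}, $E\colon\pi_{37}^{11}\to\pi_{38}^{12}$ mono) and the explicit group $\pi_{41}^{10}$ from \eqref{Oda_p4110} to pin down that the geometric bracket has at most the indeterminacy $\{P\sigma_{21}^\ast\}$-divisible part plus $\mathbb Z_2$-torsion, matching Oda's. A secondary subtlety is sign/coefficient control in $2\tilde\eta_{24}=i_{25}\eta_{24}^2$ versus $2\iota_{24}$, but since $2\bar\nu_9\nu_{17}\bar\kappa_{20}=0$ and $2\delta_{10}\sigma_{34}=0$ (and $\bar\varepsilon$-type classes here are $2$-torsion), signs are immaterial, which is why the statement is an equality rather than an equality up to sign.
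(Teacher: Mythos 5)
There is a genuine gap. The entire content of the first identity is that the Toda bracket $\{\overline{\kappa_{10}+8x\sigma_{10}^2},\tilde\eta_{24},\sigma_{26}^2\}_{13}$ consists of a \emph{single} element: since $\kappa_{10}^\ast$ is by fiat a representative of this bracket, the equality claimed is exactly the statement that the indeterminacy
$\overline{\kappa_{10}+8x\sigma_{10}^2}\circ E^{13}\pi_{28}(M^{12})+\pi_{27}^{10}\circ\sigma_{27}^2$
vanishes. Your plan to show that this indeterminacy is merely ``absorbed into the ambiguity already present in the definition of $\kappa_{10}^\ast$'' cannot yield an equality of elements, and your appeal to \eqref{Oda_p4110} and \eqref{mono} does not engage with the actual computation required. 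The paper's proof must determine $\pi_{28}(M^{12})$ via James's exact sequence for the pair $(M^{12},S^{11})$ (relative Whitehead products $[\varphi_{12},\nu_{11}^2]$, the relative suspension $E'$), identify the generators as $\tilde\eta_{11}\rho_{13}$ and an element suspending into $i_{13\ast}\pi_{29}(S^{12})$, and then kill $\kappa_{10}\circ E^{12}\pi_{29}(S^{12})$ term by term --- this last step uses $\bar\varepsilon_{10}\omega_{25}=0$, $\kappa_7\sigma_{21}=0$, $\nu_{10}\kappa_{13}^2\in E^3(\nu_7\circ\pi_{38}^{10})=0$, and $\bar\mu_{10}\kappa_{27}=0$, the very relation for which Lemma \ref{bm3ka20} was proved. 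None of this appears in your proposal, and without it neither half of the lemma is established.

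For the second identity your overall shape (use $2\tilde\eta_{24}=i_{25}\eta_{24}^2$ and $\overline{\kappa_{10}+8x\sigma_{10}^2}\circ i_{25}=\kappa_{10}+8x\sigma_{10}^2$) is right, but the mechanism by which $\bar\varepsilon$ appears is wrong. The class $\bar\varepsilon_{10}$ enters through \eqref{et6ka7}: one peels a single $\eta_{24}$ off $i_{25}\eta_{24}^2$ onto the first entry, getting $(\kappa_{10}+8x\sigma_{10}^2)\eta_{24}=\kappa_{10}\eta_{24}=\bar\varepsilon_{10}$ (the $8x\sigma_{10}^2\eta_{24}$ term dies since $2\eta=0$), so the bracket contains $\{\bar\varepsilon_{10},\eta_{25},\sigma_{26}^2\}_{13}\supset E\{\bar\varepsilon_9,\eta_{24},\sigma_{25}^2\}_{12}$. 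Your proposed route --- telescoping $\eta^2\sigma^2$ via $\eta_9\sigma_{10}=\bar\nu_9+\varepsilon_9$ and $\varepsilon_9^2=0$ --- does not produce a bracket with $\bar\varepsilon_9$ in the first slot and would not land on the stated target. Finally, the comparison with Oda's four-fold composition $\{\kappa_{10}+8x\sigma_{10}^2,2\iota_{24},\eta_{24},\sigma_{25}^2\}_1$ that you propose for the first half is not needed once $\kappa_{10}^\ast$ has been re-chosen as a representative of the Moore-space bracket; the work is entirely in the indeterminacy computation.
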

\begin{proof}
  Notice that $\{\bar\varepsilon_9,\eta_{24},\sigma_{25}^2\}_{12}$ is
  well defined by \eqref{ep3ep11}, \eqref{et9si10si17} and
  \eqref{ep9ep17}. The indeterminacy of
  $\{\overline{\kappa_{10}+8x\sigma_{10}^2},
  \tilde\eta_{24},\sigma_{26}^2\}_{13}$
  is
  \begin{equation}
    \label{e_ind}
    \overline{\kappa_{10}+8x\sigma_{10}^2}\circ E^{13}\pi_{28}(M^{12})
    +\pi_{27}^{10}\circ\sigma_{27}^2.
  \end{equation}
  If the group is trivial, the first half is proved. Moreover, the
  triviality shows the second half since \eqref{coet}, \eqref{w2} and
  \eqref{et6ka7} give
  \begin{align*}
    2\kappa_{10}^\ast
    & \in\{\overline{\kappa_{10}+8x\sigma_{10}^2},\tilde\eta_{24},
      \sigma_{26}^2\}_{13}\circ2\iota_{41}\\
    & \subset\{\overline{\kappa_{10}+8x\sigma_{10}^2},
      2\tilde\eta_{24},\sigma_{26}^2\}_{13}\\
    & =\{\overline{\kappa_{10}+8x\sigma_{10}^2},
      i_{25}\eta_{24}^2,\sigma_{26}^2\}_{13}\\
    & \supset\{\kappa_{10}\eta_{24},\eta_{25},\sigma_{26}^2\}_{13}\\
    & =\{\bar\varepsilon_{10},\eta_{25},\sigma_{26}^2\}_{13}\\
    & \supset E\{\bar\varepsilon_9,\eta_{24},\sigma_{25}^2\}_{12}\\
    & \bmod\overline{\kappa_{10}+8x\sigma_{10}^2}
      \circ E^{13}\pi_{28}(M^{12})+\pi_{27}^{10}\circ\sigma_{27}^2.
  \end{align*}
  Therefore, we shall prove the group \eqref{e_ind} to be $0$.

  By \cite[Theorem 12.17]{Tod62}, we have
  $\pi_{27}^{10}=
  \{\sigma_{10}\eta_{17}\mu_{18},\nu_{10}\kappa_{13},\bar\mu_{10}\}$.
  Then $\pi_{27}^{10}\circ\sigma_{27}^2=0$ because
  $\mu_3\sigma_{12}^2=0$ by \cite[(2.9)]{MMO75},
  \begin{equation}
    \label{ka7si21}
    \kappa_7\sigma_{21}=0
  \end{equation}
  by \cite[II-Proposition 2.1(2)]{Oda79}, and
  $\bar\mu_3\sigma_{20}^2=0$ by \eqref{bm3si20si27}.

  Let $\varphi_n\in\pi_n(M^n,S^{n-1})$ be the characteristic map of
  the $n$-cell of $M^n$ and let
  $j\colon(M^n,\ast)\rightarrow(M^n,S^{n-1})$ be the inclusion. By
  \cite[Theorem 12.16]{Tod62}, we know that
  $\pi_{n+16}(S^n)=\mathbb Z_2\{\sigma_n\mu_{n+7}\}$ for $n=11,12$.
  Then, by \cite[(2.7)]{Jam54}, there is a split exact sequence
  \begin{equation*}
    0\rightarrow
    Q\pi_{17}(S^{11})\hookrightarrow
    \pi_{28}(M^{12},S^{11})\xrightarrow{p_{12\ast}}
    \pi_{28}(S^{12})\rightarrow
    0,
  \end{equation*}
  where $Q\colon\pi_{17}(S^{11})\rightarrow\pi_{28}(M^{12},S^{11})$
  assigns to $x\in\pi_{17}(S^{11})$ the relative Whitehead product
  $[\varphi_{12},x]$, and the kernel of $Q$ is
  $(2\iota_{11\ast}\circ E^{-12}\circ H)\pi_{29}(S^{12})
  \subset2\pi_{17}(S^{11})$.
  Since $\pi_{17}(S^{11})=\mathbb Z_2\{\nu_{11}^2\}$ (see
  \cite[Proposition 5.11]{Tod62}), the kernel of $Q$ is trivial, and
  hence
  $\pi_{28}(M^{12},S^{11})=
  \mathbb Z_2\{[\varphi_{12},\nu_{11}^2]\}\oplus\mathbb Z_2\{\chi\}$
  for an element $\chi\in p_{12\ast}^{-1}(\sigma_{12}\mu_{19})$.
  Notice that
  $\partial[\varphi_{12},\nu_{11}^2]=[2\iota_{11},\nu_{11}^2]=
  [\iota_{11},2\nu_{11}^2]=0$
  and $\partial\chi=2\sigma_{11}\mu_{18}=0$, where $\partial$ is the
  connecting homomorphism
  $\pi_{28}(M^{12},S^{11})\rightarrow\pi_{27}(S^{11})$. So, by making
  use of the homotopy exact sequence of a pair $(M^{12},S^{11})$, we
  obtain
  \begin{equation*}
    \pi_{28}(M^{12})=\{\chi_1,\chi_2\}
    +i_{12\ast}\pi_{28}(S^{11}),
  \end{equation*}
  where
  $\chi_1\in j_\ast^{-1}\chi\subset
  j_\ast^{-1}p_{12\ast}^{-1}(\sigma_{12}\mu_{19})$
  and $\chi_2\in j_\ast^{-1}[\varphi_{12},\nu_{11}^2]$. We have
  \begin{equation*}
    p_{12\ast}j_\ast(\tilde\eta_{11}\rho_{13})=
    p_{12}\tilde\eta_{11}\rho_{13}=
    \eta_{12}\rho_{13}=
    \sigma_{12}\mu_{19}
  \end{equation*}
  because $\sigma_{12}\mu_{19}=\eta_{12}\rho_{13}$ (see
  \cite[Proposition 12.20]{Tod62}) and by \eqref{coet}. Thus, we may
  take $\chi_1=\tilde\eta_{11}\rho_{13}$. Since
  $j_\ast E\chi_2=E'[\varphi_{12},\nu_{11}^2]=0$ for the relative
  suspension
  $E'\colon\pi_{28}(M^{12},S^{11})\rightarrow\pi_{29}(M^{13},S^{12})$
  (see \cite{Tod52}), $E\chi_2$ is contained in the kernel of
  $j_\ast\colon\pi_{29}(M^{13})\rightarrow\pi_{29}(M^{13},S^{12})$,
  that is, $E\chi_2\in i_{13\ast}\pi_{29}(S^{12})$. Therefore,
  \eqref{e_ind} is a subgroup of
  \begin{equation*}
    \{\overline{\kappa_{10}+8x\sigma_{10}^2}
    \circ\tilde\eta_{24}\rho_{26}\}
    +\overline{\kappa_{10}+8x\sigma_{10}^2}
    \circ E^{12}(i_{13\ast}\pi_{29}(S^{12})).
  \end{equation*}
  From the definition of $\overline{\kappa_{10}+8x\sigma_{10}^2}$, we
  have
  $\overline{\kappa_{10}+8x\sigma_{10}^2}
  \circ\tilde\eta_{24}\rho_{26}=0$.
  By the fact that
  $E^{12}\pi_{29}(S^{12})=
  \mathbb Z_2\{\varepsilon_{24}^\ast\}\oplus
  \mathbb Z_2\{\sigma_{24}\eta_{31}\mu_{32}\}\oplus
  \mathbb Z_2\{\nu_{24}\kappa_{27}\}\oplus
  \mathbb Z_2\{\bar\mu_{24}\}$
  (see \cite[Theorem 12.17]{Tod62}),
  \begin{align*}
    \overline{\kappa_{10}+8x\sigma_{10}^2}
    \circ E^{12}(i_{13\ast}\pi_{29}(S^{12}))
    & =(\kappa_{10}+8x\sigma_{10}^2)\circ E^{12}\pi_{29}(S^{12})\\
    & =\kappa_{10}\circ E^{12}\pi_{29}(S^{12})\\
    & =\kappa_{10}\circ\{\varepsilon_{24}^\ast,
      \sigma_{24}\eta_{31}\mu_{32},
      \nu_{24}\kappa_{27},\bar\mu_{24}\}.
  \end{align*}
  By equations \eqref{et6ka7},
  $\eta_{13}\omega_{14}=\varepsilon_{13}^\ast$ (see
  \cite[Proposition (2.13)(1)]{Ogu64}) and
  \begin{equation}
    \label{be3om18}
    \bar\varepsilon_3\omega_{18}=0,
    \quad\text{\cite[III-(9.5)]{Oda79},}
  \end{equation}
  we have
  $\kappa_{10}\varepsilon_{24}^\ast
  =\kappa_{10}\eta_{24}\omega_{25}
  =\bar\varepsilon_{10}\omega_{25}
  =0$.
  We also have $\kappa_{10}\sigma_{24}\eta_{31}\mu_{32}=0$ by
  \eqref{ka7si21}. A relation $\kappa_7\nu_{21}=\nu_7\kappa_{10}$ (see
  \cite[Proposition (2.13)(2)]{Ogu64}) and Lemma \ref{l_ind} give
  $\kappa_{10}\nu_{24}\kappa_{27}=
  \nu_{10}\kappa_{13}^2\in
  E^3(\nu_7\circ\pi_{38}^{10})=0$.
  By \cite[Proposition 3.1]{Tod62} and Lemma \ref{bm3ka20},
  $\kappa_{10}\bar{\mu}_{24}=\bar{\mu}_{10}\kappa_{27}=0$ is obtained.
  Therefore, \eqref{e_ind} is trivial.
\end{proof}

We know $\bar\nu_{17}=\{\nu_{17},\eta_{20},\nu_{21}\}$,
$\pi_{22}^9=\mathbb Z_2\{\sigma_9\nu_{16}^2\}$ and
$\sigma_9\nu_{16}^2\circ\nu_{22}=\sigma_9\nu_{16}^3\neq0$ (see
\cite[Lemma 6.2, Theorems 7.7, 12.6]{Tod62}). A Toda bracket
$\{\varepsilon_9,\nu_{17},\eta_{20}\}\subset\pi_{22}^9$ is well
defined by \eqref{ep4nu12} and \eqref{w5}. Since
$\{\varepsilon_9,\nu_{17},\eta_{20}\}\circ\nu_{22}=
-(\varepsilon_9\circ\{\nu_{17},\eta_{20},\nu_{21}\})=
\varepsilon_9\bar{\nu}_{17}$ and
\begin{equation}
  \label{ep9bn17}
  \varepsilon_9\bar{\nu}_{17}=0,
\end{equation}
by \eqref{w11} and \eqref{ep3ep11}, we have
\begin{equation}
  \label{e_ene}
  \{\varepsilon_9,\nu_{17},\eta_{20}\}=0.
\end{equation}

\begin{lem}
  \label{l_tb2}
  $\{\varepsilon_9,\bar\nu_{17},\sigma_{25}^2\}_9=0$
  and
  $\{\varepsilon_9,\varepsilon_{17},\sigma_{25}^2\}_9
  =\{\bar\varepsilon_9,\eta_{24},\sigma_{25}^2\}_9$
  consists of one element.
\end{lem}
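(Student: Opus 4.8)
The plan is: verify that all the brackets in the statement are defined, reduce to showing their indeterminacies are trivial, and then prove the two displayed identities by splitting $\varepsilon_9$ and $\varepsilon_{17}$ via \eqref{et7si8} and passing through the auxiliary bracket $\{\eta_9,\bar\varepsilon_{10},\sigma_{25}^2\}_9$.

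First, $\varepsilon_9\bar\nu_{17}=\varepsilon_9\varepsilon_{17}=\bar\varepsilon_9\eta_{24}=\eta_9\bar\varepsilon_{10}=\varepsilon_9^2=0$: the outer composites by \eqref{ep9bn17} and \eqref{ep9ep17}, and the equalities $\varepsilon_9\varepsilon_{17}=\bar\varepsilon_9\eta_{24}=\varepsilon_9^2$ by suspending \eqref{ep3ep11}; likewise $\bar\nu_{17}\sigma_{25}^2=\varepsilon_{17}\sigma_{25}^2=\eta_{24}\sigma_{25}^2=\bar\varepsilon_{10}\sigma_{25}^2=0$ by \eqref{ep3si11}, \eqref{et9si10si17} and \eqref{be3si18}. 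Hence $\{\varepsilon_9,\bar\nu_{17},\sigma_{25}^2\}_9$, $\{\varepsilon_9,\varepsilon_{17},\sigma_{25}^2\}_9$, $\{\bar\varepsilon_9,\eta_{24},\sigma_{25}^2\}_9$ and the auxiliary $\{\eta_9,\bar\varepsilon_{10},\sigma_{25}^2\}_9$ are all defined and lie in $\pi_{40}^9$, with common right indeterminacy $\pi_{26}^9\circ\sigma_{26}^2$ and left indeterminacies $\varepsilon_9\circ\pi_{40}^{17}$, $\varepsilon_9\circ\pi_{40}^{17}$, $\bar\varepsilon_9\circ\pi_{40}^{24}$ and $\eta_9\circ\pi_{40}^{10}$. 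I would show these four indeterminacies vanish using $\pi_{40}^{24}\cong\pi_{16}^s$, the groups $\pi_{40}^{17}$, $\pi_{26}^9$, $\pi_{40}^{10}$ of \cite{Tod62,Oda79}, and relations from Section 2 (such as $\varepsilon\rho=0$, $\bar\varepsilon\sigma=0$, $\kappa\sigma=0$, Lemma \ref{bm3ka20}, and the Whitehead-product/$\sigma$ relations), so that all four brackets become singletons and only the two identities remain.

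For $\{\varepsilon_9,\bar\nu_{17},\sigma_{25}^2\}_9=0$, split $\varepsilon_9=\eta_9\sigma_{10}+\bar\nu_9$ by \eqref{et7si8}, so that this bracket lies in $\{\eta_9\sigma_{10},\bar\nu_{17},\sigma_{25}^2\}+\{\bar\nu_9,\bar\nu_{17},\sigma_{25}^2\}$. Since $\sigma_{10}\bar\nu_{17}=0$ and $\bar\nu_9\bar\nu_{17}=0$ — both following from $\sigma\bar\nu=0$ (\eqref{ep3si11}), $\eta\sigma^2=0$ (\eqref{et9si10si17}) and \eqref{et7si8} — the first summand is $\eta_9\circ\{\sigma_{10},\bar\nu_{17},\sigma_{25}^2\}$, and one checks that $\eta_9\circ\{\sigma_{10},\bar\nu_{17},\sigma_{25}^2\}$ and $\{\bar\nu_9,\bar\nu_{17},\sigma_{25}^2\}$ both vanish by tracing them through the (now controllable) ambient groups with the help of Oda's tables and Section 2; alternatively, $\bar\nu_{17}\in\{\nu_{17},\eta_{20},\nu_{21}\}$ from \eqref{bnu6} can be fed into a Jacobi identity.

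For the second identity, the relations needed are $\bar\varepsilon_9=\eta_9\kappa_{10}$ and $\kappa_{10}\eta_{24}=\bar\varepsilon_{10}$ (suspensions of \eqref{et6ka7}), $\varepsilon_{17}=\eta_{17}\sigma_{18}+\bar\nu_{17}$ (\eqref{et7si8}), $\eta_{17}\sigma_{18}=\sigma_{17}\eta_{24}$ (suspending \eqref{w9}, where the Whitehead product dies), $\varepsilon_9\sigma_{17}=0$ (\eqref{ep3si11}) and $\varepsilon_9\varepsilon_{17}=\eta_9\bar\varepsilon_{10}$ (\eqref{ep9ep17}). From $\bar\varepsilon_9=\eta_9\kappa_{10}$ with $\kappa_{10}\eta_{24}=\bar\varepsilon_{10}$ and the standard juggling inclusion $\{\alpha\beta,\gamma,\delta\}\subseteq\{\alpha,\beta\gamma,\delta\}$, one gets $\{\bar\varepsilon_9,\eta_{24},\sigma_{25}^2\}\subseteq\{\eta_9,\bar\varepsilon_{10},\sigma_{25}^2\}$. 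On the other side, the first identity reduces $\{\varepsilon_9,\varepsilon_{17},\sigma_{25}^2\}$ to $\{\varepsilon_9,\eta_{17}\sigma_{18},\sigma_{25}^2\}=\{\varepsilon_9,\sigma_{17}\eta_{24},\sigma_{25}^2\}$, and I would then use $\varepsilon_9\sigma_{17}=0$, $\eta_{24}\sigma_{25}^2=0$ and the factorisation $\varepsilon_9\varepsilon_{17}=\eta_9\bar\varepsilon_{10}$ in the appropriate sub-/super-bracket identities to place this bracket inside $\{\eta_9,\bar\varepsilon_{10},\sigma_{25}^2\}$ as well. Since $\{\eta_9,\bar\varepsilon_{10},\sigma_{25}^2\}_9$ is a single element by the first step, both $\{\varepsilon_9,\varepsilon_{17},\sigma_{25}^2\}_9$ and $\{\bar\varepsilon_9,\eta_{24},\sigma_{25}^2\}_9$ equal it, which gives the stated equality and the ``one element'' claim. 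The main obstacle is this last step — turning the juggle connecting $\{\varepsilon_9,\sigma_{17}\eta_{24},\sigma_{25}^2\}$ to $\{\eta_9,\bar\varepsilon_{10},\sigma_{25}^2\}$ into a genuine one-sided inclusion rather than merely a relation modulo indeterminacy, which forces a careful choice of the nullhomotopies witnessing $\varepsilon_9\varepsilon_{17}=\eta_9\bar\varepsilon_{10}$ — together with the indeterminacy bookkeeping of the second paragraph, where the routine case-checking against the tables of \cite{Tod62,Oda79} accumulates.
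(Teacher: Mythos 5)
Your overall framing (check well-definedness, kill the indeterminacies, then prove the two identities) matches the paper's, but the indeterminacies you write down are those of the \emph{unrestricted} brackets: for $\{\varepsilon_9,\bar\nu_{17},\sigma_{25}^2\}_9$ the left-hand indeterminacy is $\varepsilon_9\circ E^9\pi_{31}^{8}$, not $\varepsilon_9\circ\pi_{40}^{17}$, and this is the whole point of carrying the subscript $9$: $E^9\pi_{31}^{8}$ contains only $2\bar\rho_{17}$, which dies against the order-$2$ element $\varepsilon_9$, whereas $\bar\rho_{17}$ itself lies in $\pi_{40}^{17}$ and $\varepsilon_9\circ\pi_{40}^{17}$ is not visibly trivial. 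More seriously, the mechanism you propose for $\{\varepsilon_9,\varepsilon_{17},\sigma_{25}^2\}_9=\{\bar\varepsilon_9,\eta_{24},\sigma_{25}^2\}_9$ cannot work. The ``factorisation'' $\varepsilon_9\varepsilon_{17}=\eta_9\bar\varepsilon_{10}$ is a relation between two composites that are both zero by \eqref{ep9ep17}; it factors no entry of either bracket, so no juggling identity turns it into an inclusion involving $\{\eta_9,\bar\varepsilon_{10},\sigma_{25}^2\}$ --- the step you yourself flag as the main obstacle is not a technicality but a dead end. And the route you do spell out --- write $\varepsilon_{17}=\eta_{17}\sigma_{18}+\bar\nu_{17}$, replace $\eta_{17}\sigma_{18}$ by $\sigma_{17}\eta_{24}$, and use $\{\varepsilon_9\sigma_{17},\eta_{24},\sigma_{25}^2\}\subset\{\varepsilon_9,\sigma_{17}\eta_{24},\sigma_{25}^2\}$ with $\varepsilon_9\sigma_{17}=0$ --- only shows that $0$ lies in the unrestricted bracket, i.e.\ it determines the answer modulo $\varepsilon_9\circ\pi_{40}^{17}$. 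Were this argument able to control the $\{\ \}_9$-bracket, it would give $\{\varepsilon_9,\varepsilon_{17},\sigma_{25}^2\}_9=0$, which is false: immediately after this lemma the paper deduces $\{\varepsilon_9,\varepsilon_{17},\sigma_{25}^2\}_9=\{\bar\varepsilon_9,\eta_{24},\sigma_{25}^2\}_9=\delta_9\sigma_{33}$, a generator of a $\mathbb Z_2$-summand in \eqref{Oda_p4009}. The information is lost exactly in your substitutions: $\varepsilon_{17}=\eta_{17}\sigma_{18}+\bar\nu_{17}$ and $\eta_{17}\sigma_{18}=\sigma_{17}\eta_{24}$ hold on $S^{17}$ but fail on $S^{8}$ and $S^{9}$ (by \eqref{et7si8} and \eqref{w9} the corrections $(E^{10}\sigma')\eta_{24}$ and $[\iota_9,\iota_9]$ only die after suspending), and a $\{\ \}_9$-bracket depends on the chosen desuspension of its middle entry, not merely on its class in $\pi_{25}^{17}$.

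What is missing is the device the paper actually uses: genuine factorisations of the \emph{middle} entries through two-cell complexes, which do produce one-sided inclusions compatible with the subscript. For the first half one writes $\bar\nu_6=\chi_1\chi_2$ with $\chi_1$ an extension of $\nu_6$ over $\Sigma^4\mathbb C\mathrm P^2$ and $\chi_2$ a coextension of $\nu_{10}$, so that $\{\varepsilon_9,\bar\nu_{17},\sigma_{25}^2\}_9=\{\varepsilon_9E^{11}\chi_1,E^{11}\chi_2,\sigma_{25}^2\}_9$ and $\varepsilon_9E^{11}\chi_1=0$ by \eqref{e_ene}; for the second half one writes $\varepsilon_5=\chi_3\tilde\eta_{11}$ with $\chi_3$ an extension of $\nu_5^2$ over the Moore space, computes $\varepsilon_9E^{12}\chi_3=\bar\varepsilon_9p_{24}$ from $\bar\varepsilon_5\in\{\varepsilon_5,\nu_{13}^2,2\iota_{19}\}_1$, and then shifts to $\{\bar\varepsilon_9,p_{24}\tilde\eta_{23},\sigma_{25}^2\}_9=\{\bar\varepsilon_9,\eta_{24},\sigma_{25}^2\}_9$ with a small, explicitly killable indeterminacy $\bar\varepsilon_9\circ E^9\pi_{31}^{15}$. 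Without some such factorisation your plan cannot close, so the proposal has a genuine gap at its central step.
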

\begin{proof}
  By \eqref{ep3si11}, \eqref{ep9ep17} and \eqref{ep9bn17},
  $\{\varepsilon_9,\bar\nu_{17},\sigma_{25}^2\}_9$ and
  $\{\varepsilon_9,\varepsilon_{17},\sigma_{25}^2\}_9$ are well
  defined. Well-definedness of the remaining Toda bracket
  $\{\bar\varepsilon_9,\eta_{24},\sigma_{25}^2\}_9$ is stated in the
  proof of Lemma \ref{l_tb1}. The indeterminacy of
  $\{\varepsilon_9,\bar\nu_{17},\sigma_{25}^2\}_9$ is
  $\varepsilon_9\circ E^9\pi_{31}^{8}+\pi_{26}^9\circ\sigma_{26}^2$,
  which is the same as that of
  $\{\varepsilon_9,\varepsilon_{17},\sigma_{25}^2\}_9$. Here,
  \begin{equation*}
    E^9\pi_{31}^{8}
    =\{2\bar\rho_{17},\nu_{17}\bar\kappa_{20},\phi_{17}\}
  \end{equation*}
  and
  \begin{equation*}
    \pi_{26}^9=\{\sigma_9\eta_{16}\mu_{17},
    \nu_9\kappa_{12},\bar\mu_9,\eta_9\mu_{10}\sigma_{19}\}
  \end{equation*}
  by \cite[pp.~24--27]{MMO75} and \cite[Theorem 12.7]{Tod62},
  respectively.
  The group $\varepsilon_9\circ E^9\pi_{31}^{8}$ is trivial:
  $\varepsilon_9\circ2\bar\rho_{17}=
  2\varepsilon_9\circ\bar\rho_{17}=0$
  as $\varepsilon_9$ is of order $2$ (see \cite[Theorem 7.1]{Tod62});
  $\varepsilon_9\nu_{17}=0$ by \eqref{ep4nu12}; and
  $\varepsilon_9\phi_{17}=0$ by \cite[III-Proposition 2.6(1)]{Oda79}.
  We also have $\pi_{26}^9\circ\sigma_{26}^2=0$ in the same way that
  $\pi_{27}^{10}\circ\sigma_{27}^2=0$ is proved in Lemma \ref{l_tb1}.
  Then the two Toda brackets consist of one element.

  By \eqref{bnu6}, we may set $\bar\nu_6=\chi_1\chi_2$, where $\chi_1$
  is an extension of $\nu_6$ and $\chi_2$ is a coextension of
  $\nu_{10}$ with respect to $\eta_9$. Let $\mathbb C\mathrm P^2$ be
  the complex plane,
  $i_{\mathbb C}\colon S^2\rightarrow\mathbb C\mathrm P^2$ be the
  inclusion and
  $p_{\mathbb C}\colon\mathbb C\mathrm P^2\rightarrow S^4$ be the
  collapsing map. By \eqref{e_ene}, we obtain
  \begin{equation*}
    \varepsilon_9E^{11}\chi_1
    \in\varepsilon_9\circ\{\nu_{17},\eta_{20},E^{17}p_{\mathbb C}\}
    =-(\{\varepsilon_9,\nu_{17},\eta_{20}\}\circ E^{18}p_{\mathbb C})
    =0.
  \end{equation*}
  By the fact that $\{\eta_n,\nu_{n+1},\sigma_{n+4}\}=0$ for $n\geq11$
  (see \cite[Lemma 4.1(2)]{MMpre}),
  \begin{equation*}
    (E^2\chi_2)\sigma_{16}
    \in\{E^9i_{\mathbb C},\eta_{11},\nu_{12}\}\circ\sigma_{16}
    =-(E^9i_{\mathbb C}\circ\{\eta_{11},\nu_{12},\sigma_{15}\})
    =0.
  \end{equation*}
  So, we have
  \begin{align*}
    \{\varepsilon_9,\bar\nu_{17},\sigma_{25}^2\}_9
    & =
    \{\varepsilon_9,E^{11}(\chi_1\chi_2),\sigma_{25}^2\}_9
    \\
    & =
    \{\varepsilon_9E^{11}\chi_1,E^{11}\chi_2,\sigma_{25}^2\}_9
    =\{0,E^{11}\chi_2,\sigma_{25}^2\}_9
    =0.
  \end{align*}
  This leads to the first half.

  Since $\varepsilon_5=\{\nu_5^2,2\iota_{11},\eta_{11}\}$ (see
  \cite[(7.6)]{Tod62}), we may set
  $\varepsilon_5=\chi_3\tilde\eta_{11}$, where $\chi_3$ is an
  extension of $\nu_5^2$ with respect to $2\iota_{11}$. Since
  $\bar\varepsilon_5=\{\varepsilon_5,\nu_{13}^2,2\iota_{19}\}_1$ (see
  \cite[III-Proposition 2.3(5)]{Oda79}), we have
  \begin{equation*}
    \varepsilon_5E^8\chi_3
    \in\varepsilon_5\circ E\{\nu_{12}^2,2\iota_{18},p_{18}\}
    =\{\varepsilon_5,\nu_{13}^2,2\iota_{19}\}_1\circ p_{20}
    =\bar\varepsilon_5p_{20}.
  \end{equation*}
  Therefore,
  \begin{align*}
    \{\varepsilon_9,\varepsilon_{17},\sigma_{25}^2\}_9
    & =
    \{\varepsilon_9,E^{12}(\chi_3\tilde\eta_{11}),\sigma_{25}^2\}_9
    \\
    & =
    \{\varepsilon_9E^{12}\chi_3,\tilde\eta_{23},\sigma_{25}^2\}_9
    \\
    & =\{\bar\varepsilon_9p_{24},\tilde\eta_{23},\sigma_{25}^2\}_9
    \\
    & \in
    \{\bar\varepsilon_9,\eta_{24},\sigma_{25}^2\}_9
    \bmod\bar\varepsilon_9\circ E^9\pi_{31}^{15}
    +\pi_{26}^9\circ\sigma_{26}^2.
  \end{align*}
  Here, $E^9\pi_{31}^{15}=\{\omega_{24},\sigma_{24}\mu_{31}\}$ by
  \cite[Theorem 12.16]{Tod62}, $\bar\varepsilon_9\omega_{24}=0$ by
  \eqref{be3om18}, and $\bar\varepsilon_9\sigma_{24}\mu_{31}=0$ by
  \eqref{be3si18}. Then the indeterminacy is
  $\pi_{26}^9\circ\sigma_{26}^2$, which equals $0$ as above. This
  leads to the second half and completes the proof.
\end{proof}

By \cite[(3.5)]{MMO75}, $\delta_3$ is an element in
$\{\varepsilon_3,\varepsilon_{11}+\bar\nu_{11},\sigma_{19}\}_1$. Since
$E^3\pi_{24}^8=E\pi_{26}^{10}=\{\sigma_{11}\mu_{18}\}$ (see
\cite[Theorems 12.6, 12.16]{Tod62}), the indeterminacy of the Toda
bracket coincides with that of
$\{\varepsilon_3,\varepsilon_{11}+\bar\nu_{11},\sigma_{19}\}_3$. Then
$\delta_3\in
\{\varepsilon_3,\varepsilon_{11}+\bar\nu_{11},\sigma_{19}\}_3$,
and hence, by Lemma \ref{l_tb2},
\begin{align*}
  \delta_9\sigma_{33}
  & \in
  \{\varepsilon_9,\varepsilon_{17}+\bar\nu_{17},\sigma_{25}\}_9
  \circ\sigma_{33}
  \\
  & \subset
  \{\varepsilon_9,\varepsilon_{17}+\bar\nu_{17},\sigma_{25}^2\}_9
  \\
  & =
  \{\varepsilon_9,\varepsilon_{17},\sigma_{25}^2\}_9
  +\{\varepsilon_9,\bar\nu_{17},\sigma_{25}^2\}_9
  \\
  & =
  \{\bar\varepsilon_9,\eta_{24},\sigma_{25}^2\}_9.
\end{align*}
This gives
$\delta_9\sigma_{33}=\{\bar\varepsilon_9,\eta_{24},\sigma_{25}^2\}_9$.
Therefore, by Lemma \ref{l_tb1}, we have
\begin{equation*}
  2\kappa_{10}^\ast
  =E\{\bar\varepsilon_9,\eta_{24},\sigma_{25}^2\}_{12}
  \subset E\{\bar\varepsilon_9,\eta_{24},\sigma_{25}^2\}_9
  =\delta_{10}\sigma_{34}.
\end{equation*}
This yields the assertion of Theorem \ref{IM}\ref{IM_p4110}.

\noindent
Tomohisa Inoue\\
Faculty of Economics, Shinshu University\\
Matsumoto, 390-8621, Japan\\
t\_inoue@shinshu-u.ac.jp\\

\noindent
Juno Mukai\\
Developmental Education Center, Matsumoto University\\
Matsumoto, 390-1295, Japan\\
juno.mukai@matsu.ac.jp\\

\end{document}